\theoremstyle{plain}
\newtheorem{mythe}{Theorem}[section]
\newtheorem{mylem}[mythe]{Lemma}
\newtheorem{lemma}[mythe]{Lemma}
\newtheorem{corollary}[mythe]{Corollary}
\newtheorem{remark}[mythe]{Remark}
\newtheorem{prop}[mythe]{Proposition}
\newcommand{\ba}{\begin{eqnarray*}}
\newcommand{\ea}{\end{eqnarray*}}
\newcommand{\bq}{\begin{eqnarray}}
\newcommand{\eq}{\end{eqnarray}}
\begin{document}

\begin{frontmatter}
\title{Mutation frequencies in a birth-death branching process}
\runtitle{Mutation frequencies in a branching process}

\begin{aug}
\author{\fnms{David} \snm{Cheek}\thanksref{3}\ead[label=e1]{d.m.cheek@sms.ed.ac.uk}},
\and
\author{\fnms{Tibor} \snm{Antal}\ead[label=e2]{tibor.antal@ed.ac.uk}}

\thankstext{3}{Corresponding author. \printead{e1}}

\runauthor{D. Cheek and T. Antal}

\affiliation{University of Edinburgh}

\address{School of Mathematics, University of Edinburgh\\
Edinburgh, EH9 3FD, UK\\
\printead{e1}\\
\phantom{E-mail:\ }\printead*{e2}}

\end{aug}

\begin{abstract}
First, we revisit the stochastic Luria-Delbr{\"u}ck model: a classic two-type branching process which describes cell proliferation and mutation. We prove limit theorems and exact results for the mutation times, clone sizes, and number of mutants. Second, we extend the framework to consider mutations at multiple sites along the genome. The number of mutants in the two-type model characterises the mean site frequency spectrum in the multiple-site model. Our predictions are consistent with previously published cancer genomic data.
\end{abstract}

\begin{keyword}[class=MSC]
\kwd[Primary ]{60J80}
\kwd[; secondary ]{60J28}
\kwd{92D10}
\kwd{92D20}
\end{keyword}

\begin{keyword}
\kwd{branching processes}
\kwd{cancer}
\kwd{population genetics}
\end{keyword}

\end{frontmatter}

\section{Introduction}
Luria and Delbr\"{u}ck's famous work of 1943 combined mathematical modelling with experiment \cite{ld}. They considered an exponentially growing population of bacterial cells which is sensitive to attack by a lethal virus. The bacteria may mutate to become resistant to the virus. Lea and Coulson \cite{lc} obtained a probability distribution for the number of mutants, commonly known as the Luria-Delbr{\"u}ck distribution. The distribution has seen empirical evidence and become a standard tool for the estimation of mutation rates in bacteria \cite{rf}. While early formulations of the model were semi-deterministic, stochastic cell growth was subsequently incorporated (see \cite{qz} for a review). Notably, Kendall allowed for cells to grow as birth-death branching processes \cite{dk}.

Kendall's two-type branching process, often referred to as the stochastic Luria-Delbr{\"u}ck model, has been foundational in the mathematical understanding of cancer evolution. The model and various extensions have been used to study drug resistance \cite{inm,k,fl,belife}, driver mutations \cite{dm,dflmm}, and metastasis \cite{mni,hm,na,ding}, for example. As introduced by Kendall, wildtype (type A) and mutant (type B) cells are assumed to divide, die, and mutate independently of each other, according to
\ba
\begin{cases}
A\rightarrow AA,\quad&\text{rate }\alpha_A;\\
A\rightarrow \emptyset,\quad&\text{rate }\beta_A;\\
A\rightarrow AB,\quad&\text{rate }\nu;\\
B\rightarrow BB,\quad&\text{rate }\alpha_B;\\
B\rightarrow \emptyset,\quad&\text{rate }\beta_B.
\end{cases}
\ea
Whether the model represents the emergence of drug resistance in cancer or bacteria, the total number of mutants is of key interest. In recent years, \cite{wa,inm,kwb,kl1,kl,ak,hy} derived exact and approximate distributions for the number of mutants at fixed times and population sizes.

Our first objective is to offer a mathematically rigorous account of the two-type model, looking at the number of mutants, mutation times, and clone sizes (a clone is a subpopulation of mutant cells initiated by a mutation). Both previously known and new results are presented. We explore small mutation limits and long-term almost sure convergence. Specialising to neglect cell death, we give some exact distributions.

Our second objective is to introduce a neutral model of cancer evolution, which keeps track of mutations at $S$ sites on the genome. A site refers to a base pair. In our multiple-site model, each cell is labelled by a sequence $(z_1,..,z_S)\in\{0,1\}^S$, where $z_i=1$ means that the cell is mutated at site $i$. The number of mutants with respect to a particular site follows the two-type model. Thus many of the two-type results are applicable in the multiple-site setting.

A standard summary statistic of genomic data is the site frequency spectrum. It is defined as the number of sites who see mutations in $k$ cells, for $k\in\mathbb{Z}_{\geq0}$. We prove that the mean site frequency spectrum can be approximated with a generalisation of the Luria-Delbr{\"u}ck distribution. This result is consistent with cancer genomic data presented in \cite{gs,ib}.

Of course, many works have attempted to predict mutation frequencies in cancer. Prominent examples are \cite{gs,ib,oi,durr}, who gave approximations for the mean site frequency spectrum in a population of cancer cells. Every one of these works and countless others have used the infinite sites assumption, which says that each mutation occurs at a unique site. However, recent statistical analysis of cancer genomic data has refuted the validity of this simplification \cite{isr}. We do not use the infinite sites assumption, and make a theoretical argument against it.\\

The rest of the paper is organised as follows. In Section \ref{sec:mod}, we introduce the two-type model. In Section \ref{sec:lt}, we present long-term almost sure convergence results. In Section \ref{sec:lpsm}, we define and study the large population small mutation limit. In Section \ref{sec:ltsm}, we look at the large time small mutation limit. In Section \ref{sec:finite}, we present results on the number of mutants at a finite population size. In Section \ref{sec:sitefreq}, we introduce the multiple-site model and present results on the site frequency spectrum. In Section \ref{sec:sum}, we discuss the multiple-site model in relation to recent works and data. In Section \ref{sec:proofs}, we present proofs of our main results. See Appendix \ref{sec:app} for a generalisation of the results of Sections \ref{sec:lpsm} and \ref{sec:ltsm}.

\section{Two-type model}
\label{sec:mod}
The wildtype cells grow as a linear birth-death process $(A(t))_{t\geq0}$, with birth and death rates $\alpha_A$ and $\beta_A$ respectively. That is to say $(A(t))_{t\geq0}$ is a continuous time Markov process on $\mathbb{Z}_{\geq0}$ with transition rates
\ba
i\mapsto
\begin{cases}i+1,\quad\text{rate }i\alpha_A;\\i-1,\quad\text{rate }i\beta_A.
\end{cases}
\ea
The initial number of wildtype cells $A(0)\in\mathbb{N}$ is fixed.
The mutation rate is $\nu>0$. Mutation events occur as a Cox process $(K(t))_{t\geq0}$ with intensity $(\nu A(t))_{t\geq0}$. The mutation times are
\[T_i:=\inf\{t\geq0:K(t)=i\}\]
for $i\in\mathbb{N}$.
Each mutation event initiates a clone which grows as a linear birth-death process with birth and death rates $\alpha_B$ and $\beta_B$. Clones grow independently of the wildtype growth and mutation times, and are represented by the i.i.d. processes $(Y_i(t))_{t\geq0}$ for $i\in\mathbb{N}$, with $Y_i(0)=1$. The total mutant population size at time $t$ is
\[B(t)=\sum_{i=1}^{K(t)}Y_i(t-T_i).\]
Write $\lambda_A=\alpha_A-\beta_A$ and $\lambda_B=\alpha_B-\beta_B$ for the fitnesses of the wildtype and mutant cells. We shall only be concerned with the case of supercritical wildtype growth, $\lambda_A>0.$

Note that the process counting the number of cells, $(A(t),B(t))_{t\geq0}$, is a Markov process on $\mathbb{Z}_{\geq0}\times\mathbb{Z}_{\geq0}$, with transition rates
\[(i,j)\mapsto\begin{cases}(i+1,j),\quad\text{rate }i\alpha_A;\\(i-1,j),\quad\text{rate }i\beta_A;\\(i,j+1),\quad\text{rate }i\nu+j\alpha_B;\\(i,j-1),\quad\text{rate }j\beta_B.\\
\end{cases}\]

We are interested in the process at a fixed time $t$, and at the random times
\[\sigma_n:=\inf\{t\geq0:A(t)+B(t)\geq n\}\]
and
\[\tau_n:=\inf\{t\geq0:A(t)\geq n\},\]
for $n\in\mathbb{N}$. Trivially, $\sigma_n\leq\tau_n$.

A classic application of the model is the emergence of drug resistance in cancer. Here, type A and B cells represent drug sensitive and resistant cells respectively. While the age of tumour is typically unknown, its size can be measured. Thus the times $\sigma_n$ are relevant.

Another interpretation of the model is metastasis. Here, type A cells make up the primary tumour, and the clones represent secondary tumours. In this case the times $\tau_n$ are relevant.
\section{Large time and population limits}
\label{sec:lt}
Keeping the mutation rate fixed, the long-term behaviour of the model is mostly already well understood. Durrett and Moseley \cite{dm} study the case $\lambda_A<\lambda_B$. Janson \cite{sj} studies a broad class of urn models, which encompasses Kendall's model in the case $\lambda_A>\lambda_B$. We do not present results as detailed as Janson's. Our aim for this section is not to offer a comprehensive study, but rather bring together basic results which give valuable insight.

First we make note of a classic result:
\bq\lim_{t\rightarrow\infty}e^{-\lambda_At}A(t)=W\label{classic}\eq
almost surely (see \cite{an} or \cite{rdbp}). Here,
\ba
W\overset{d}{=}\sum_{i=1}^{A(0)}\chi_i\psi_i,
\ea
where the $\chi_i\sim$ Bernoulli$(\lambda_A/\alpha_A)$ and the $\psi_i\sim$ Exponential($\lambda_A/\alpha_A$) are independent.
\begin{remark}
The event that the wildtype population eventually becomes extinct agrees with the event $\{W=0\}$ almost surely.
\end{remark}
We see a trichotomy, depending on the relative fitness of wildtype and mutant cells. Part 1 of Theorem \ref{largetime} is a special case of \cite[Theorem 3.1]{sj}, and part 3 is \cite[Theorem 2]{dm}.
\begin{mythe}[Large time limit]\label{largetime}The following limits hold almost surely.
\begin{enumerate}
\item
For $\lambda_A>\lambda_B$,
\[\lim_{t\rightarrow\infty}e^{-\lambda_At}B(t)=\frac{\nu}{\lambda_A-\lambda_B}W.\]
\item
For $\lambda_A=\lambda_B$,
\[\lim_{t\rightarrow\infty}t^{-1}e^{-\lambda_At}B(t)=\nu W.\]
\item
For $\lambda_A<\lambda_B$,
\[\lim_{t\rightarrow\infty}e^{-\lambda_B t}B(t)=V.\]
\end{enumerate}

\end{mythe}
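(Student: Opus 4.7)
The plan is to bootstrap each of the three limits from the classical result \eqref{classic} by identifying, for each regime, a martingale built as an explicit linear combination of $A(t)$, $B(t)$, and (in one case) $t$ whose appropriate exponential rescaling is killed by the generator $\mathcal{L}$ of the two-type Markov chain $(A(t),B(t))$. Once such a martingale is shown to converge almost surely, rearrangement together with \eqref{classic} delivers the stated limit. Parts 1 and 3 are covered by \cite{sj} and \cite{dm} respectively, but a single martingale actually handles both.

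For parts 1 and 3 I would take
\[M(t):=e^{-\lambda_B t}\Bigl(B(t)-\tfrac{\nu}{\lambda_A-\lambda_B}\,A(t)\Bigr).\]
A direct calculation of $\mathcal{L}(j-ci)$ with $c:=\nu/(\lambda_A-\lambda_B)$ gives $\lambda_B(j-ci)$, so $M$ is a local martingale. Its carr\'e du champ $\mathcal{L}(f^2)-2f\mathcal{L}f$ is linear in $(i,j)$, and integrating $e^{-2\lambda_B s}(E[A(s)]+E[B(s)])$ against the explicit first-moment formulas yields $\sup_t E[M(t)^2]<\infty$ outside a borderline regime discussed below, hence $M(t)\to V$ almost surely for some integrable $V$. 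Rearranging,
\[e^{-\lambda_A t}B(t)=\tfrac{\nu}{\lambda_A-\lambda_B}\,e^{-\lambda_A t}A(t)+e^{(\lambda_B-\lambda_A)t}\,M(t),\]
and letting $t\to\infty$ via \eqref{classic} delivers part 1 (the second term decays); rewriting the same identity with $e^{-\lambda_B t}$ on the left and using that $e^{-\lambda_B t}A(t)\to 0$ a.s.\ when $\lambda_A<\lambda_B$ yields part 3.

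For part 2 the coefficient $\nu/(\lambda_A-\lambda_B)$ blows up, so I would instead take the time-dependent candidate $g(t,i,j):=e^{-\lambda t}(j-\nu t i)$ with $\lambda:=\lambda_A=\lambda_B$. A direct computation of $\partial_t g+\mathcal{L}g$ shows that the linear-in-$i$ drift $\nu i$ produced by $\mathcal{L}$ on $j-\nu ti$ exactly cancels the term $-\nu i e^{-\lambda t}$ coming from $\partial_t$, so that $N(t):=g(t,A(t),B(t))$ is a martingale. The same carr\'e du champ strategy, now tracking the extra polynomial-in-$t$ factors, gives $N(t)\to V'$ a.s., and dividing
\[B(t)=\nu t\,A(t)+e^{\lambda t}N(t)\]
through by $te^{\lambda t}$ and invoking \eqref{classic} produces part 2.

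The main obstacle I anticipate is establishing a.s.\ convergence of $M$ in regimes where the naive second-moment estimate diverges (for example part 1 with $\lambda_A\geq 2\lambda_B$, where the integrand $e^{(\lambda_A-2\lambda_B)s}$ is not integrable at $\infty$; in particular this covers $\lambda_B\leq 0$). The remedy is either to apply the Kesten--Stigum theorem to the multitype mean matrix $\begin{pmatrix}\lambda_A & \nu\\ 0 & \lambda_B\end{pmatrix}$, whose $X\log X$ condition is immediate from the geometric offspring distributions of the birth-death dynamics, or to decompose $B(t)=\sum_{i\leq K(t)}Y_i(t-T_i)$ via the mutation times and apply the clone-level convergence $e^{-\lambda_B u}Y_i(u)\to W_i'$ before summing against the Cox process. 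Either route reduces to a careful interchange of limit and sum, which constitutes the technical heart of the argument.
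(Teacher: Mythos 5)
Your martingales are exactly the ones the paper uses: $e^{-\lambda_Bt}\bigl(B(t)-\tfrac{\nu}{\lambda_A-\lambda_B}A(t)\bigr)$ for parts 1 and 3 and $e^{-\lambda_At}\bigl(B(t)-\nu tA(t)\bigr)$ for part 2, and your generator computations are correct. Part 2 goes through as you describe (the carr\'e du champ is linear in $(i,j)$, the first moments give an integrand of order $t^2e^{-\lambda_At}$, so the martingale is bounded in $L_2$ and converges a.s.; this is precisely the paper's argument). Your treatment of part 3 is a genuine small improvement over the paper, which simply cites Durrett--Moseley: when $\lambda_A<\lambda_B$ the integrand in $\mathbb{E}\langle M\rangle_t$ is of order $e^{-\lambda_Bs}+e^{(\lambda_A-2\lambda_B)s}$, both exponents being negative, so $M$ really is $L_2$-bounded and the a.s. limit of $M(t)$ \emph{is} the random variable $V$.

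The gap is in part 1. There the conclusion you need is not that $M(t)$ converges but that $e^{(\lambda_B-\lambda_A)t}M(t)\to0$ a.s., and outside the window $\lambda_B<\lambda_A<2\lambda_B$ (so in particular whenever $\lambda_B\le0$, a case of real interest here) $M$ is not $L_2$-bounded and need not converge at all; you correctly flag this, but neither of your remedies closes it. The classical Kesten--Stigum theorem assumes a positively regular mean matrix, and the matrix $\bigl(\begin{smallmatrix}\lambda_A&\nu\\0&\lambda_B\end{smallmatrix}\bigr)$ is reducible (type $B$ never produces type $A$); the obstruction is decomposability, not the $X\log X$ condition, so you would need the separate Kesten--Stigum results for decomposable processes, which is a different and much heavier citation. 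The clone decomposition $B(t)=\sum_{i\le K(t)}Y_i(t-T_i)$ with termwise convergence $e^{-\lambda_Bu}Y_i(u)\to W_i'$ is the right tool for part 3, but it fails for part 1: when $\lambda_A>\lambda_B$ each clone is asymptotically negligible and $\sum_ie^{-\lambda_BT_i}=\infty$ a.s. (since $T_i\sim\lambda_A^{-1}\log i$), so one needs a law of large numbers over clones, not termwise limits. The fix the paper sketches (following Janson) is elementary and is the missing step: apply Doob's $L_2$ inequality to $M$ on each block $[n-1,n]$ to bound $\mathbb{P}\bigl[\sup_{t\in[n-1,n]}|e^{(\lambda_B-\lambda_A)t}M(t)|>\epsilon\bigr]$ by a constant times $e^{2(\lambda_B-\lambda_A)n}\mathbb{E}[M(n)^2]$; since $\mathbb{E}[M(n)^2]$ grows at most like $e^{(\lambda_A-2\lambda_B)^+n}$ (times $n$ in the boundary case), this bound is $O(e^{-\lambda_An})$ in every subcase, hence summable, and Borel--Cantelli gives $e^{(\lambda_B-\lambda_A)t}M(t)\to0$ a.s. without $M$ itself converging. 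With that substitution your argument for part 1 is complete.
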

The limit random variable $W$ comes from (\ref{classic}). The limit random variable $V$ is $[0,\infty)$-valued with mean
\[\mathbb{E}[V]=\frac{A(0)\nu}{\lambda_B-\lambda_A}.\]
The full distribution of $V$ is given in \cite[Section 4.3]{ak}, which we do not state here for the sake of brevity.

For $\lambda_A\geq \lambda_B$, conditioned on wildtype non-extinction, any individual clone ultimately makes up zero proportion of the mutant population. That is to say, conditioned on $W>0$,
\[\lim_{t\rightarrow\infty}\frac{Y_i(t-T_i)}{B(t)}=0\]
almost surely. We say that the mutant population is driven by the wildtype growth. This is seen in the limit random variables' dependence on $W$.

For $\lambda_A<\lambda_B$, early arriving clones make an important contribution to the mutant population. Conditioned on $W>0$,
\[\lim_{t\rightarrow\infty}\frac{Y_i(t-T_i)}{B(t)}=\frac{X_ie^{-\lambda_BT_i}}{V}\]
almost surely. Note that if $W>0$, then $V>0$ \cite{dm}. The $X_i$ are i.i.d. with distribution $\chi^B\psi^B$, where $\chi^B\sim$ Bernoulli$(\lambda_B/\alpha_B)$ and $\psi^B\sim$ Exponential($\lambda_B/\alpha_B$) are independent. We say that the mutant population is driven by the clone growth.

To see the asymptotic behaviour of the number of mutations, simply consider $\alpha_B=\beta_B=0$ in Theorem \ref{largetime}:
\[
\lim_{t\rightarrow\infty}e^{-\lambda_At}K(t)=\frac{\nu}{\lambda_A}W
\]
almost surely.

As corollaries to Theorem \ref{largetime} we obtain large population limits. Note that conditioned on $W>0$, $\lim_{n\rightarrow\infty}\tau_n=\lim_{n\rightarrow\infty}\sigma_n=\infty$ almost surely.
\begin{corollary}[Large wildtype population limit]\label{taucor}Conditioned on\\$W>0$, the following limits hold almost surely.
\begin{enumerate}
\item
For $\lambda_A>\lambda_B$,
\[\lim_{n\rightarrow\infty}n^{-1}B(\tau_n)=\frac{\nu}{\lambda_A-\lambda_B}.\]
\item
For $\lambda_A=\lambda_B$,
\[\lim_{n\rightarrow\infty}(n\log(n))^{-1}B(\tau_n)=\frac{\nu}{\lambda_A}.\]
\item
For $\lambda_A<\lambda_B$,
\[\lim_{n\rightarrow\infty}n^{-\lambda_B/\lambda_A} B(\tau_n)=VW^{-\lambda_B/\lambda_A}.\]
\end{enumerate}
\end{corollary}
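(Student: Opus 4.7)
The plan is to reduce each part directly to the corresponding part of Theorem \ref{largetime} by evaluating the large-time limit at the random time $t=\tau_n$, and to control the exponential factor $e^{-\lambda_A \tau_n}$ using the classical limit (\ref{classic}).

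The first step is to show that, on $\{W>0\}$, we have $ne^{-\lambda_A \tau_n}\to W$ almost surely. Since $A$ is a linear birth-death process whose jumps are of size $\pm 1$, for every $n>A(0)$ the hitting value satisfies $A(\tau_n)=n$ exactly (the process cannot overshoot). On $\{W>0\}$ the wildtype population grows to infinity, so $\tau_n<\infty$ and $\tau_n\to\infty$ almost surely as $n\to\infty$. Composing (\ref{classic}) with the a.s.\ divergent sequence $\tau_n$ therefore gives
\[
n\,e^{-\lambda_A \tau_n}=A(\tau_n)\,e^{-\lambda_A \tau_n}\longrightarrow W
\]
almost surely on $\{W>0\}$. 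Taking logarithms, $\lambda_A\tau_n-\log n\to -\log W$, and in particular $\tau_n/\log n\to 1/\lambda_A$ a.s.\ on $\{W>0\}$.

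The second step is to combine this with Theorem \ref{largetime} in each case by simply writing $B(\tau_n)$ as a ratio of the quantities appearing in the large-time and classical limits. For Part 1, with $\lambda_A>\lambda_B$,
\[
\frac{B(\tau_n)}{n}=\frac{e^{-\lambda_A \tau_n}B(\tau_n)}{n\,e^{-\lambda_A \tau_n}}\longrightarrow\frac{\nu W/(\lambda_A-\lambda_B)}{W}=\frac{\nu}{\lambda_A-\lambda_B}
\]
on $\{W>0\}$. For Part 2, with $\lambda_A=\lambda_B$, applying Theorem \ref{largetime}.2 at $t=\tau_n$ and using $\tau_n/\log n\to 1/\lambda_A$ yields
\[
\frac{B(\tau_n)}{n\log n}=\frac{\tau_n^{-1}e^{-\lambda_A \tau_n}B(\tau_n)}{\bigl(n\,e^{-\lambda_A \tau_n}\bigr)\log(n)/\tau_n}\longrightarrow\frac{\nu W}{W\,\lambda_A}=\frac{\nu}{\lambda_A}.
\]
For Part 3, with $\lambda_A<\lambda_B$, raise the limit $ne^{-\lambda_A \tau_n}\to W$ to the power $\lambda_B/\lambda_A$ to get $n^{\lambda_B/\lambda_A}e^{-\lambda_B \tau_n}\to W^{\lambda_B/\lambda_A}$, and combine with Theorem \ref{largetime}.3:
\[
\frac{B(\tau_n)}{n^{\lambda_B/\lambda_A}}=\frac{e^{-\lambda_B \tau_n}B(\tau_n)}{n^{\lambda_B/\lambda_A}\,e^{-\lambda_B \tau_n}}\longrightarrow\frac{V}{W^{\lambda_B/\lambda_A}}=VW^{-\lambda_B/\lambda_A}.
\]

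There is no real obstacle here; the argument is essentially a time-change from deterministic to random times. The only delicate point is the transfer of the a.s.\ limit in (\ref{classic}) from continuous $t$ to the random sequence $\tau_n$, which is justified by the fact that $\tau_n\to\infty$ a.s.\ on $\{W>0\}$ and that a.s.\ convergence is preserved under a.s.\ divergent time substitutions. The identity $A(\tau_n)=n$ is what tightens the classical limit into the explicit constant $W$ on the right-hand side, allowing the ratio representations above.
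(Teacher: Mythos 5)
Your argument is correct and is essentially the paper's own approach: the paper proves Corollary \ref{sigmacor}, part 2 by rewriting the quantity of interest as a ratio of the expressions appearing in Theorem \ref{largetime} and (\ref{classic}) evaluated at the random hitting time, and states that the remaining parts (including Corollary \ref{taucor}) follow in the same manner, which is exactly what you carry out. Your explicit observations that $A(\tau_n)=n$ (no overshoot) and that $\tau_n\to\infty$ a.s.\ on $\{W>0\}$ justify the time substitution and match the paper's implicit reasoning.
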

\begin{corollary}[Large total population limit]\label{sigmacor}Conditioned on $W>0$, the following limits hold almost surely.
\begin{enumerate}
\item
For $\lambda_A>\lambda_B$,
\[\lim_{n\rightarrow\infty}n^{-1}B(\sigma_n)=\frac{\nu}{\lambda_A-\lambda_B+\nu}.\]
\item
For $\lambda_A=\lambda_B$,
\[\lim_{n\rightarrow\infty}n^{-1}\log(n)(n-B(\sigma_n))=\frac{\lambda_A}{\nu}.\]
\item
For $\lambda_A<\lambda_B$,
\[\lim_{n\rightarrow\infty}n^{-\lambda_A/\lambda_B}(n-B(\sigma_n))=V^{-\lambda_A/\lambda_B}W.\]
\end{enumerate}
\end{corollary}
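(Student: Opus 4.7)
The plan is to exploit the exact identity $A(\sigma_n)+B(\sigma_n)=n$, which holds because the total-population process $A+B$ is a birth–death chain on $\mathbb{Z}_{\ge 0}$ with unit jumps and therefore cannot skip levels on its way up from $A(0)+B(0)$. Conditioned on $\{W>0\}$, one has $\sigma_n\to\infty$ almost surely, so I can evaluate the large-time asymptotics of Theorem \ref{largetime} at $t=\sigma_n$ and then solve for the quantity of interest in each regime.

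In case 1 ($\lambda_A>\lambda_B$) both $A(\sigma_n)$ and $B(\sigma_n)$ scale like $e^{\lambda_A\sigma_n}$, so Theorem \ref{largetime} gives
\[
e^{-\lambda_A\sigma_n}\bigl(A(\sigma_n)+B(\sigma_n)\bigr)\longrightarrow W+\tfrac{\nu}{\lambda_A-\lambda_B}W=\tfrac{\lambda_A-\lambda_B+\nu}{\lambda_A-\lambda_B}W.
\]
Dividing the limit for $e^{-\lambda_A\sigma_n}B(\sigma_n)$ by the limit for $e^{-\lambda_A\sigma_n}n$ and using that $W>0$ on the conditioning event immediately yields $n^{-1}B(\sigma_n)\to \nu/(\lambda_A-\lambda_B+\nu)$.

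Case 3 ($\lambda_A<\lambda_B$) is similar but now the mutant population dominates. From $e^{-\lambda_B\sigma_n}B(\sigma_n)\to V$ and $A(\sigma_n)+B(\sigma_n)=n$, I first extract the leading asymptotic $e^{\lambda_B\sigma_n}\sim n/V$, equivalently $\sigma_n=\lambda_B^{-1}(\log n-\log V)+o(1)$ on $\{W>0\}$ (note $V>0$ there, by the result of \cite{dm} recalled in the text). Feeding this back into $A(\sigma_n)\sim W e^{\lambda_A\sigma_n}=W\bigl(e^{\lambda_B\sigma_n}\bigr)^{\lambda_A/\lambda_B}$ gives $n-B(\sigma_n)=A(\sigma_n)\sim W(n/V)^{\lambda_A/\lambda_B}$, which is the claim.

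The main obstacle is case 2 ($\lambda_A=\lambda_B$), because a simple substitution does not suffice: I must resolve a $\log\log$ correction to $\sigma_n$. From $A(\sigma_n)+B(\sigma_n)=n$ together with the two parts of Theorem \ref{largetime} specialised to $\lambda_A=\lambda_B$, I would write
\[
n=e^{\lambda_A\sigma_n}\bigl(W+o(1)\bigr)+\sigma_n e^{\lambda_A\sigma_n}\bigl(\nu W+o(1)\bigr)=\sigma_n e^{\lambda_A\sigma_n}\nu W\bigl(1+o(1)\bigr),
\]
which forces $\sigma_n\sim\lambda_A^{-1}\log n$ after taking logs once. Taking logs a second time then refines this to $\lambda_A\sigma_n=\log n-\log\log n+O(1)$, so that
\[
e^{\lambda_A\sigma_n}\sim\frac{n\lambda_A}{\nu W\log n},\qquad A(\sigma_n)\sim We^{\lambda_A\sigma_n}\sim\frac{n\lambda_A}{\nu\log n}.
\]
Using again $n-B(\sigma_n)=A(\sigma_n)$ yields $n^{-1}\log(n)\bigl(n-B(\sigma_n)\bigr)\to \lambda_A/\nu$. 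All convergences are a.s.\ on $\{W>0\}$, so the corollary follows. The only delicate point is to carry the $o(1)$ errors rigorously through the log-inversion in case 2, but since both error terms stem from a.s.\ limits we can work on the probability-one event where both hold and treat the inversion deterministically.
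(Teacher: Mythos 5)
Your proposal is correct and follows essentially the same route as the paper: evaluate Theorem \ref{largetime} and (\ref{classic}) at $t=\sigma_n$, use $A(\sigma_n)+B(\sigma_n)=n$ and $\sigma_n\to\infty$ a.s.\ on $\{W>0\}$, and solve for the quantity of interest (the paper details only part 2, via an algebraic identity for $\log(n)A(\sigma_n)/n$ that expresses it as a product of a.s.\ convergent factors, and notes the other parts are similar). The only difference is cosmetic: in case 2 the paper's rearrangement makes the explicit $\log\log n$ inversion of $\sigma_n$ unnecessary, since $\tfrac{\log n}{n}A(\sigma_n)\sim\tfrac{\log n}{\nu\sigma_n}\to\lambda_A/\nu$ already follows from $\lambda_A\sigma_n\sim\log n$, but your refinement is harmless and correct.
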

Note that $n-B(\sigma_n)=A(\sigma_n)$.
In case 1, the wildtype and mutant cells come to coexist in a
constant ratio. In cases 2 and 3, the mutant cells eventually
dominate the overall population, with
\bq\lim_{n\rightarrow\infty}n^{-1}B(\sigma_n)=1\label{domi}\eq
almost surely.

\section{Large population small mutation limit}
\label{sec:lpsm}

A tumour may comprise around $10^9$ cells upon detection, with mutation rates per base pair per cell division estimated as $5\times 10^{-10}$ in colorectal cancer \cite{jea}, for example. Hence, a biologically relevant limit can be found by taking the population size to infinity and the mutation rate to zero, while keeping their product fixed.


Suppose that $(\nu_n)_{n\in\mathbb{N}}$ is a sequence of mutation rates satisfying
\bq\lim_{n\rightarrow\infty}n\nu_n=\theta,\label{nucond}\eq
for some $\theta\in(0,\infty)$. For each $n\in\mathbb{N}$, consider the two-type model with mutation rate $\nu_n$. For the wildtype population, mutant population, clone sizes, number of mutations, and mutation times, write $A^{(n)}(\cdot)$, $B^{(n)}(\cdot)$, $Y^{(n)}_i(\cdot)$, $K^{(n)}(\cdot)$, and $T_i^{(n)}$ respectively. Write
\[\sigma'_n:=\inf\{t\geq0:A^{(n)}(t)+B^{(n)}(t)\geq n\}\]
and
\[\tau'_n:=\inf\{t\geq0:A^{(n)}(t)\geq n\}.\]

First we see a connection between the times $\tau_n'$ and $\sigma_n'$ in the large $n$ limit.
\begin{prop}\label{tausigma}Conditioning on $\tau_n'<\infty$,
\ba
\tau_n'-\sigma_n'\rightarrow0
\ea
in probability, as $n\rightarrow\infty$.
\end{prop}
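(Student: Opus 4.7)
The plan is to localise both $\sigma_n'$ and $\tau_n'$ near the same random time $t^\star := (\log n - \log W)/\lambda_A$, where $W$ is the a.s.\ limit of $e^{-\lambda_A t}A^{(n)}(t)$ from \eqref{classic}. Since $A^{(n)}$ has the same law for every $n$ (the mutation rate $\nu_n$ affects only $B^{(n)}$) and since $A^{(n)}(\tau_n')=n$, the martingale limit immediately yields $\tau_n' - t^\star \to 0$ a.s.\ on $\{W>0\}$. All jumps of $A^{(n)}+B^{(n)}$ are of unit size, so $A^{(n)}(\sigma_n')=n-B^{(n)}(\sigma_n')$; once I establish
\[
B^{(n)}(\sigma_n')/n \longrightarrow 0 \quad\text{in probability,}
\]
applying the same martingale limit at $\sigma_n'$ gives $\sigma_n'-t^\star \to 0$ in probability, and hence $\tau_n'-\sigma_n' \to 0$. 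Conditioning on $\{\tau_n'<\infty\}$ rather than on $\{W>0\}$ is harmless because these events differ by a set of probability $o(1)$.

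For the key reduction, fix $\epsilon>0$ and set $t_0:=t^\star+\epsilon$; since $\sigma_n'\le\tau_n'=t^\star+o_p(1)$, the event $\{\sigma_n'\le t_0\}$ has probability $1-o(1)$, and on it
\[
B^{(n)}(\sigma_n') \;\le\; \max_{t\le t_0}B^{(n)}(t) \;\le\; \sum_{i:\,T_i^{(n)}\le t_0}\max_{u\le t_0-T_i^{(n)}}Y_i(u).
\]
I would split the sum according to whether $T_i^{(n)}\le t_0-K$ (\emph{early}) or $T_i^{(n)}\in(t_0-K,t_0]$ (\emph{late}), for a large truncation parameter $K$. Conditionally on the wildtype trajectory, the mean number of early mutations is $\nu_n\int_0^{t_0-K}A^{(n)}(s)\,ds$, which tends in probability to $\theta e^{\lambda_A(\epsilon-K)}/\lambda_A$ and is therefore arbitrarily small for $K$ large. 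On the complementary event, only late mutations contribute, and there are $O_p(1)$ of them (their conditional mean being $\nu_n\int_{t_0-K}^{t_0}A^{(n)}(s)\,ds \sim \theta e^{\lambda_A\epsilon}(1-e^{-\lambda_AK})/\lambda_A$); each contributes at most $\sup_{u\le K}Y_i(u)$, a random variable with finite expectation (by comparison with a pure birth process run to the fixed time $K$). Hence the running maximum is $O_p(1)$, and therefore $o_p(n)$.

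The main obstacle is precisely this running-maximum estimate: a naive pure-birth bound over the whole interval $[0,t_0]$ would introduce a factor $e^{\alpha_B t_0}$ which can dwarf $n$ once $\lambda_B$ is large, so the truncation at a bounded age $K$ is essential and lets the argument proceed uniformly whether $\lambda_A$ is greater than, equal to, or less than $\lambda_B$. Everything else is a combination of the martingale convergence theorem with Slutsky-type manipulations applied to $e^{-\lambda_A\sigma_n'}A^{(n)}(\sigma_n')$ and $e^{-\lambda_A\tau_n'}A^{(n)}(\tau_n')$.
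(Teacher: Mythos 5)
Your argument is correct, and it takes a genuinely different route from the paper's for the key step. Both proofs share the same skeleton: show that $B^{(n)}(\sigma_n')$ is negligible compared to $n$, so that $A^{(n)}(\sigma_n')=n-B^{(n)}(\sigma_n')\approx n$ and the exponential growth of the wildtype forces $\sigma_n'$ and $\tau_n'$ together. The paper executes this by constructing all $n$ on a single probability space (the same $A$, the same clones $Y_i$, the same driving Poisson process $N$; only the time change $\int_0^t\nu_nA$ depends on $n$), proving almost sure convergence of the recentred mutation times (Lemma \ref{easytimes}), deducing $\sup_nB^{(n)}(\sigma_n')<\infty$ almost surely from the monotone majorant $\hat B^{(n)}(t)=\sum_i\sup_{s\leq t-T_i^{(n)}}Y_i(s)$ (Lemma \ref{suplemma}), and then sandwiching $\tau_{n-\sqrt n}'\leq\sigma_n'\leq\tau_n'$ with $\tau_n'-\tau_{n-\sqrt n}'\rightarrow0$ (Lemma \ref{tslemma}). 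You instead get $B^{(n)}(\sigma_n')=O_p(1)$ by a truncated first-moment computation: mutations of age greater than $K$ at time $t^\star+\epsilon$ are absent with high probability because their expected number is $O(e^{-\lambda_AK})$, while the $O_p(1)$ late clones each contribute a running maximum over an age window of length $K$, which has finite mean; you then localise both stopping times at $t^\star=(\log n-\log W)/\lambda_A$. Your truncation at bounded clone age is exactly the right device to avoid the useless global bound of order $e^{\alpha_B t_0}$, and it works uniformly in the sign of $\lambda_A-\lambda_B$; the swap of the conditioning event $\{\tau_n'<\infty\}$ for $\{W>0\}$ is justified exactly as in the paper's Lemmas \ref{conditioning} and \ref{we}. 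What the paper's coupling buys that your argument does not is almost sure (rather than in-probability) convergence of $\tau_n'-\sigma_n'$ conditional on $\{W>0\}$, which is what the subsequent proofs of Theorem \ref{times} and Propositions \ref{lpsml}--\ref{lc} rely on to transfer the whole pathwise analysis from $\tau_n'$ to $\sigma_n'$; your version suffices for Proposition \ref{tausigma} as stated but would not plug directly into that later machinery.
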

All of our large population small mutation limit results will hold both in terms of the wildtype population and total population size. That is to say, using $\tau_n'$ or $\sigma_n'$ as the time variable will yield the same distributions in the large $n$ limit. To save writing each result twice, we introduce the sequence $(\rho_n)$, which may refer to $(\tau_n')$ or $(\sigma_n')$.

Underlying all subsequent results of this section is that the times of mutation centered about $(\rho_n)$ converge.
\begin{mythe}[Mutations times]\label{times} Conditioning on $\rho_n<\infty$,
\[K^{(n)}(\rho_n+t)\rightarrow K^*(t)\]
in finite dimensional distributions, as $n\rightarrow\infty$. $K^*(t)$ is a Poisson process on $\mathbb{R}$ with intensity $\theta e^{\lambda_At}$.
\end{mythe}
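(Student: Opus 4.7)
The plan is to exploit the Cox--process structure: conditional on the wildtype path $A$, $K^{(n)}$ is an inhomogeneous Poisson process of intensity $\nu_n A(\cdot)$, so a Poisson variable whose (random) mean converges to a deterministic constant is itself convergent in distribution. First I would reduce to $\rho_n=\tau_n'$. Since $\nu_n$ does not influence the wildtype process, one may treat $A^{(n)}=A$ as independent of $n$, with $\tau_n':=\tau_n$. Proposition \ref{tausigma} gives $\tau_n-\sigma_n'\to 0$ in probability, and the count of mutations in the gap $[\sigma_n',\tau_n]$ is, conditional on $A$, Poisson with mean at most $\nu_n\cdot n\cdot(\tau_n-\sigma_n')\to 0$ in probability (using $n\nu_n\to\theta$). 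Hence replacing $\sigma_n'$ by $\tau_n$ in the statement does not affect any finite dimensional limit, and it suffices to prove the theorem for $\rho_n=\tau_n$.

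The heart of the proof is to show that the cumulative intensity $\Lambda_n(t):=\nu_n\int_0^{\tau_n+t}A(u)\,du$ converges almost surely on $\{W>0\}$ to the deterministic limit $\theta e^{\lambda_A t}/\lambda_A$. The classical result (\ref{classic}) combined with $A(\tau_n)=n$ gives $\tau_n=\lambda_A^{-1}\log(n/W)+o(1)$ almost surely on $\{W>0\}$, and since $\sup_{u\geq N}|e^{-\lambda_A u}A(u)-W|\to 0$ as $N\to\infty$, I obtain $\nu_n A(\tau_n+v)\to\theta e^{\lambda_A v}$ uniformly for $v$ in any compact interval $[-M,T]$ almost surely on $\{W>0\}$. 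The key point is that the random factor $W$ cancels between $\nu_n e^{\lambda_A\tau_n}=(n\nu_n)/W+o(1)\to \theta/W$ and the asymptotic $A(\tau_n+v)\sim We^{\lambda_A(\tau_n+v)}$. Integrating on $[-M,t]$ gives the local contribution, and the tail $v<-M$ is controlled by $\nu_n\int_0^{\tau_n-M}A(u)\,du\leq C(W)e^{-\lambda_A M}+o(1)$ almost surely (using the same long-time asymptotic for $A$), which is arbitrarily small for large $M$.

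To conclude finite dimensional convergence at $t_1<\cdots<t_k$, write the increments $\Delta_j^{(n)}:=K^{(n)}(\tau_n+t_j)-K^{(n)}(\tau_n+t_{j-1})$ for $2\leq j\leq k$ and $\Delta_1^{(n)}:=K^{(n)}(\tau_n+t_1)$. Given the wildtype $\sigma$-algebra $\mathcal F^A$, the $\Delta_j^{(n)}$ are independent Poisson variables with means $\Lambda_n(t_j)-\Lambda_n(t_{j-1})$ (respectively $\Lambda_n(t_1)$), and by the preceding paragraph these means converge almost surely on $\{W>0\}$ to the corresponding increments of $\theta e^{\lambda_A\cdot}/\lambda_A$. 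Bounded convergence applied to the joint conditional characteristic function transfers this to joint convergence in distribution of $(\Delta_1^{(n)},\ldots,\Delta_k^{(n)})$, and hence of $(K^{(n)}(\tau_n+t_1),\ldots,K^{(n)}(\tau_n+t_k))$, to the independent Poisson vector describing the increments of $K^*$. Since $\mathbb{P}(\{\tau_n<\infty\}\triangle\{W>0\})\to 0$, the conclusion under the conditioning of the theorem follows.

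The main obstacle is to obtain a \emph{deterministic} limit for $\Lambda_n(t)$ despite the random time shift by $\tau_n$. The resolution is the $W$-cancellation highlighted above, which reflects that the time $\tau_n$ is calibrated by hitting level $n$: this calibration forces the effective intensity $\nu_n A(\tau_n+\cdot)$ to lose its dependence on the random long-time size $W$ of the wildtype population, producing a non-mixed (and explicit) Poisson limit rather than a doubly stochastic one.
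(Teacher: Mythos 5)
Your argument is correct, and its analytic core is the same as the paper's: the cancellation $\nu_n e^{\lambda_A\tau_n'}=(n\nu_n)\,e^{\lambda_A\tau_n'}/A(\tau_n')\to\theta/W$ against $A(\tau_n'+v)\sim We^{\lambda_A(\tau_n'+v)}$, plus a domination bound $\nu_nA(\tau_n'+v)\leq Le^{\lambda_A v}$ to control the tail of the cumulative intensity, is exactly the computation in the paper's Lemma \ref{easytimes}. Where you differ is in the packaging. The paper realises all $n$ on one probability space with a single unit-rate Poisson process $N$, setting $K^{(n)}(t)=N\bigl(\int_0^t\nu_nA(s)\,ds\bigr)$, so that convergence of the time-change gives \emph{almost sure} convergence of $K^{(n)}(\rho_n+t)$ itself and f.d.d.\ convergence is immediate; this coupling is then reused verbatim for the clone-size statements (Propositions \ref{lpsml}, \ref{cs}, \ref{lc}). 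You instead condition on $\mathcal{F}^A$ and pass to the limit in the joint conditional characteristic function of the increments, which is equally valid but delivers only the distributional statement. Two small points to tighten. First, your reduction from $\sigma_n'$ to $\tau_n'$ cites Proposition \ref{tausigma}; that is legitimate (its proof does not use Theorem \ref{times}), but in the paper that proposition is itself obtained from Lemmas \ref{suplemma} and \ref{tslemma}, i.e.\ from an almost sure bound on $\sup_nB^{(n)}(\sigma_n')$, so a self-contained proof still owes that work. Second, $\sigma_n'$ depends on the mutation process and is not $\mathcal{F}^A$-measurable, so the count of mutations on $[\sigma_n',\tau_n']$ is not literally conditionally Poisson; fix this by noting that on the event $\{\tau_n'-\sigma_n'<\delta\}$ the gap count is at most $K^{(n)}(\tau_n')-K^{(n)}(\tau_n'-\delta)$, whose conditional mean is at most $n\nu_n\delta$. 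Your final step replacing $\{W>0\}$ by $\{\rho_n<\infty\}$ is the content of the paper's Lemmas \ref{conditioning} and \ref{we}.
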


A direct consequence of Theorem \ref{times} is that for each $i\in\mathbb{N}$, conditioning on $\rho_n<\infty$,
\[T_i^{(n)}-\rho_n\rightarrow T_i^*:=\inf\{t\in\mathbb{R}:K^*(t)=i\}\]
in distribution, as $n\rightarrow\infty$. In particular, $T_1^*$ has Gumbel distribution:
\ba\mathbb{P}[T^*_1\geq t]=\exp\left(-\frac{\theta}{\lambda_A} e^{\lambda_At}\right).\label{firsttime}
\ea

Next we look at the number of mutants.
\begin{prop}[Number of mutants]\label{lpsml} Conditioning on $\rho_n<\infty$,
\[
B^{(n)}(\rho_n+t)\rightarrow B^*(t):=\sum_{i=1}^{K^*(t)}Y_i(t-T^*_i)\]
in finite dimensional distributions, as $n\rightarrow\infty$. The $Y_i(\cdot)$ and $K^*(\cdot)$ are independent.
\end{prop}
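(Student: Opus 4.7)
The plan is to bootstrap from Theorem \ref{times}. The crucial observation is that the clone processes $(Y_i^{(n)})$ are i.i.d.\ linear birth--death processes with rates $\alpha_B, \beta_B$ that do not depend on $n$ and are independent of the mutation mechanism. Hence I may couple all $n$ together by setting $Y_i^{(n)} = Y_i$, reducing the claim to a statement about the joint limit of the mutation counts and mutation times, with the clones treated as a fixed, independent background.

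Fix a finite grid $t_1 < \cdots < t_k$ and an integer $M$, and condition on $\rho_n < \infty$ throughout. Since events of the form $\{T_1^{(n)} - \rho_n \leq s_1, \ldots, T_M^{(n)} - \rho_n \leq s_M\}$ can be rewritten in terms of finite-dimensional values of $K^{(n)}(\rho_n + \cdot)$ via the monotone staircase identity $\{T_i \leq s\} = \{K(s) \geq i\}$, Theorem \ref{times} upgrades to the joint convergence in distribution
\[
\bigl(K^{(n)}(\rho_n + t_1), \ldots, K^{(n)}(\rho_n + t_k),\ T_1^{(n)} - \rho_n, \ldots, T_M^{(n)} - \rho_n\bigr) \to \bigl(K^*(t_1), \ldots, K^*(t_k),\ T_1^*, \ldots, T_M^*\bigr).
\]
I would then apply Skorokhod's representation theorem on the conditional probability space to assume this convergence is almost sure, and mount the clones $(Y_i)_{i \geq 1}$ on the same space with their original joint law, independent of the mutation data.

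A continuous-mapping argument then finishes the job. By integer-valued convergence, for each $j$ one has $K^{(n)}(\rho_n + t_j) = K^*(t_j)$ for all sufficiently large $n$, almost surely, so
\[
B^{(n)}(\rho_n + t_j) = \sum_{i=1}^{K^{(n)}(\rho_n + t_j)} Y_i\bigl(t_j - (T_i^{(n)} - \rho_n)\bigr), \qquad B^*(t_j) = \sum_{i=1}^{K^*(t_j)} Y_i\bigl(t_j - T_i^*\bigr)
\]
eventually have the same (a.s.\ finite) number of summands. Since $T_i^*$ has a continuous distribution and is independent of the c\`adl\`ag process $Y_i$, almost surely $t_j - T_i^*$ is not a jump time of $Y_i$, and therefore $Y_i(t_j - (T_i^{(n)} - \rho_n)) \to Y_i(t_j - T_i^*)$ almost surely. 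Summing the finitely many terms yields $B^{(n)}(\rho_n + t_j) \to B^*(t_j)$ almost surely, hence in distribution, jointly over $j = 1, \ldots, k$.

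The main technical point I anticipate is packaging the joint convergence in the displayed array rather than just the marginal convergence of counts and times separately; this is handled by exploiting the staircase structure above. The truncation at $M$ is removed by choosing $M$ so that $\mathbb{P}(K^*(t_k) > M)$ is arbitrarily small and sending $M \to \infty$ after $n \to \infty$, using $K^*(t_k) < \infty$ a.s. Handling the conditioning on $\{\rho_n < \infty\}$ is routine and does not interact with the continuous-mapping step.
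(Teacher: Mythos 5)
Your overall strategy---couple the clones across $n$, obtain almost sure convergence of the mutation counts and times after Skorokhod representation, and conclude by continuity of the $Y_i$ at the limiting evaluation points---is close in spirit to the paper's, which however builds one explicit probability space (a single wildtype process $A$, a single Poisson process $N$, a single family of clones $Y_i$, with only $\nu_n$ varying) on which $T_i^{(n)}-\rho_n\to T_i^*$ is already almost sure given $W>0$, so no representation theorem is needed. Your continuity argument (the $T_i^*$ are atomless and independent of the $Y_i$, hence $t_j-T_i^*$ is a.s.\ not a jump time) and the eventual-equality trick for the integer-valued counts are both sound.

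The genuine gap is in your final sentence: for $\rho_n=\sigma_n'$ the conditioning is \emph{not} routine, and it \emph{does} interact with the step where you ``mount the clones with their original joint law, independent of the mutation data.'' The event $\{\sigma_n'<\infty\}$ is defined through $A+B^{(n)}$ and hence through the clone processes themselves; under $\mathbb{P}[\,\cdot\,|\sigma_n'<\infty]$ the $Y_i$ are neither independent of the mutation data nor distributed according to their unconditional law (large clones make $\{\sigma_n'<\infty\}$ more likely). So you cannot Skorokhod-represent the mutation data alone under the conditional measure and then attach independent clones with their original law; what must converge is the conditional joint law of (counts, times, clones), and the asymptotic independence is precisely what needs proving. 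The paper resolves this with Lemmas \ref{we} and \ref{conditioning}: the events $\{\sigma_n'<\infty\}$ decrease to $\{W>0\}$, which is measurable with respect to $A$ alone and hence genuinely independent of the clones, and $\lim_n\mathbb{P}[E_n|F_n]=\lim_n\mathbb{P}[E_n|F]$ whenever $F_n\downarrow F$ with $\mathbb{P}[F]>0$. You need this (or an equivalent) step before invoking independence; for $\rho_n=\tau_n'$ your argument goes through as written, since $\{\tau_n'<\infty\}$ depends only on $A$.
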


In particular $B^{(n)}(\rho_n)$ converges in distribution to
\bq
B^*=B^*(0)\overset{d}{=}\sum_{i=1}^{K^*}Y_i(\xi_i),\label{sfsldd}
\eq
where $K^*=K^*(0)\sim\mathrm{Poisson}(\theta/\lambda_A)$, and $\xi_i$ are i.i.d. Exponential($\lambda_A$) random variables independent of the $Y_i(\cdot)$ and $K^*(\cdot)$.

Here $\xi_i$ corresponds to the age of a randomly selected clone, and $Y_i(\xi_i)$ the size of the clone. From \cite[page 109]{an},
\bq\label{bdgf}
\mathbb{E}[z^{Y_i(t)}]=\frac{\beta_B(z-1)-e^{-\lambda_Bt}(\alpha_Bz-\beta_B)}{\alpha_B(z-1)-e^{-\lambda_Bt}(\alpha_Bz-\beta_B)},
\eq
and so
\bq
r(z):&=&\mathbb{E}[z^{Y_i(\xi_i)}]\nonumber\\
&=&\int_0^\infty\mathbb{E}[z^{Y_i(t)}]\lambda_Ae^{-\lambda_At}dt\label{clone}\\
&=&1- (1-q_B)\setlength\arraycolsep{0.5pt}
F\left(\begin{matrix}1,\lambda_A/\lambda_B\\1+\lambda_A/\lambda_B\end{matrix};\frac{q_B-z}{1-z}\right).\nonumber
\eq
The function $F$ is Gauss's hypergeometric function , and $q_B=\beta_B/\alpha_B$, which is a clone's ultimate extinction probability if $q_B\leq1$. The third equality of (\ref{clone}) can be seen by making a change of variable $s=e^{-\lambda_B t}$, and then using a standard integral representation for $F$ (for example \cite[C.8]{ka}).

Clearly $B^*$ is a compound Poisson random variable (\ref{sfsldd}), and has generating function
\bq
\mathbb{E}[z^{B^*}]=\exp\left(\frac{\theta}{\lambda_A} (r(z)-1)\right). \label{mutantdist}
\eq
This recovers recent results of Kessler and Levine \cite{kl} who provided a heuristic derivation of this expression, and Keller and Antal \cite{ka} who derived it for a deterministic exponentially growing wildtype population. Its large $\theta$ limit appeared in Durrett and Moseley \cite{dm} for $\lambda_A<\lambda_B$ (see \cite{ka} for a discussion). If $\alpha_B=\lambda_A$ and $\beta_B=0$, (\ref{mutantdist}) reduces to the Luria-Delbr{\"u}ck distribution \cite{lc}:
\[
\mathbb{E}[z^{B^*}]=(1-z)^{\frac{\theta}{\lambda_A}(z^{-1}-1)}.
\]

\begin{remark}\label{pl}
For $\lambda_B>0$, the generating functions (\ref{clone}) and (\ref{mutantdist}) yield power law tails:
\[\lim_{k\rightarrow\infty}k^{1+\lambda_A/\lambda_B}\mathbb{P}[Y_i(\xi_i)=k]=\frac{\lambda_A}{\lambda_B}(1-q_B)^{1-\lambda_A/\lambda_B}\Gamma(1+\lambda_A/\lambda_B)\]
and
\[\lim_{k\rightarrow\infty}k^{1+\lambda_A/\lambda_B}\mathbb{P}[B^*=k]=\frac{\theta}{\lambda_B}(1-q_B)^{1-\lambda_A/\lambda_B}\Gamma(1+\lambda_A/\lambda_B),\]
which are given in \cite{na,ka,kl}.
\end{remark}

Of potential interest is the number of clones of a given size, perhaps above some lower limit for reliable detection. Let $I$ be a subset of $\mathbb{Z}_{\geq0}$. Consider
\ba
C_I^{(n)}(t)=\sum_{i=1}^{K^{(n)}(t)}1_{\left\{Y^{(n)}_i(t-T_i^{(n)})\in I\right\}}(t),
\ea
giving the number of clones whose size is in $I$ at time $t$.
\begin{prop}[Number of clones of a given size]\label{cs}
Conditioning on $\rho_n<\infty$,
\ba
C^{(n)}_I(\rho_n)\rightarrow C_I^*\sim\mathrm{Poisson}\left(\frac{\theta}{\lambda_A}\mathbb{P}[Y_i(\xi_i)\in I]\right)
\ea
in distribution, as $n\rightarrow\infty$.
\end{prop}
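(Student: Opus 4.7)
The plan is a generating-function argument built on Theorem \ref{times} and the conditional independence of clones from the wildtype history. Fix $z \in [0,1]$ and write $p_I(s) := \mathbb{P}[Y_1(s) \in I]$. Since the clone processes $(Y_i^{(n)}(\cdot))_{i\geq 1}$ are i.i.d.\ and independent of $(A^{(n)}, K^{(n)}, T_i^{(n)}, \rho_n)$, conditioning on the mutation times yields
\[
\mathbb{E}\!\left[z^{C_I^{(n)}(\rho_n)} \bigm| \rho_n < \infty\right]
= \mathbb{E}\!\left[\prod_{i=1}^{K^{(n)}(\rho_n)} \bigl(1 + (z-1)\,p_I(\rho_n - T_i^{(n)})\bigr) \bigm| \rho_n < \infty\right].
\]

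Next I would pass to the limit. The function $f_z(t) := 1 + (z-1)\,p_I(-t)$ is continuous on $(-\infty,0]$ and bounded by $1$ in absolute value, while Theorem \ref{times} supplies the joint finite-dimensional convergence
\[
\bigl(K^{(n)}(\rho_n),\,T_1^{(n)} - \rho_n,\,T_2^{(n)} - \rho_n,\ldots\bigr) \longrightarrow \bigl(K^*,\,T_1^*,\,T_2^*,\ldots\bigr).
\]
Since the product has at most $K^{(n)}(\rho_n)$ nontrivial factors each bounded by $1$, a standard truncation combined with dominated convergence gives
\[
\mathbb{E}\!\left[z^{C_I^{(n)}(\rho_n)} \bigm| \rho_n < \infty\right] \longrightarrow \mathbb{E}\!\left[\prod_{i=1}^{K^*} f_z(T_i^*)\right].
\]

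To identify this limit, recall that $K^*$ is a Poisson process on $\mathbb{R}$ with intensity $\theta e^{\lambda_A t}$, so $K^*(0) \sim \mathrm{Poisson}(\theta/\lambda_A)$, and conditional on $K^*(0) = k$ the points $T_1^*, \ldots, T_k^*$ form an i.i.d.\ sample with density $\lambda_A e^{\lambda_A t}$ on $(-\infty,0]$. Hence $-T_i^* \overset{d}{=} \xi_i \sim \mathrm{Exponential}(\lambda_A)$, and the expectation becomes
\[
\mathbb{E}\!\left[\mathbb{E}[f_z(-\xi_1)]^{K^*(0)}\right]
= \mathbb{E}\!\left[(1 + (z-1)p)^{K^*(0)}\right]
= \exp\!\left(\frac{\theta p}{\lambda_A}(z-1)\right),
\]
where $p = \mathbb{P}[Y_1(\xi_1) \in I]$. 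This is the PGF of $\mathrm{Poisson}(\theta p / \lambda_A)$, and convergence of PGFs on $[0,1]$ implies convergence in distribution.

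The main obstacle is the limit-interchange in the middle step, where the number of factors $K^{(n)}(\rho_n)$ is itself random and only converges in distribution. This is handled by noting that the functional $(k,t_1,\ldots,t_k) \mapsto \prod_{i=1}^k f_z(t_i)$ is bounded and continuous on the space of finite tuples (with, say, the disjoint-union topology) and that $\mathbb{P}[K^*(0) \geq M] \to 0$ as $M \to \infty$, which together license truncation followed by the finite-dimensional convergence of Theorem \ref{times}. Once this technicality is in place, the remaining Poisson-thinning calculation is routine.
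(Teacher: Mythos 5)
Your route—computing the probability generating function of $C_I^{(n)}(\rho_n)$ by integrating out the clones conditionally on the mutation times, passing to the limit via Theorem \ref{times}, and identifying the limit by Poisson thinning—is genuinely different from the paper's proof, which couples all $n$ on one probability space (only the mutation times depend on $n$ there), obtains almost sure convergence of $K^{(n)}(\rho_n)$ and of the clone ages $\rho_n-T_i^{(n)}$ on $\{W>0\}$, and then transfers to conditioning on $\rho_n<\infty$ via Lemmas \ref{conditioning} and \ref{we}. Your calculation has the merit of making the Poisson form of $C_I^*$ completely explicit, and the limit-interchange you flag is indeed manageable by truncation since the test functional is bounded by $1$ and $K^{(n)}(\rho_n)$ is tight.

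However, there is a genuine gap: your opening factorization rests on the claim that the $Y_i$ are independent of $\rho_n$, and this is false when $\rho_n=\sigma_n'$. The proposition is asserted for $(\rho_n)$ equal to either $(\tau_n')$ or $(\sigma_n')$, and $\sigma_n'=\inf\{t\geq 0: A^{(n)}(t)+B^{(n)}(t)\geq n\}$ is defined through the mutant population, hence through the clone processes themselves. Conditioning on the mutation times therefore does not decouple the clone ages $\sigma_n'-T_i^{(n)}$ from the $Y_i$, and the identity
$\mathbb{E}[z^{C_I^{(n)}(\sigma_n')}\mid\sigma_n'<\infty]=\mathbb{E}[\prod_{i}(1+(z-1)p_I(\sigma_n'-T_i^{(n)}))\mid\sigma_n'<\infty]$
is unjustified; likewise the conditioning event $\{\sigma_n'<\infty\}$ is not independent of the clones. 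Your argument is sound for $\rho_n=\tau_n'$, where everything in sight is measurable with respect to $(A^{(n)},N)$ alone. To cover $\sigma_n'$ you need an additional ingredient showing that the two time scales agree asymptotically—this is exactly what the paper's Lemmas \ref{suplemma} and \ref{tslemma} provide ($\tau_n'-\sigma_n'\to 0$ almost surely on $\{W>0\}$, because the mutant population at time $\sigma_n'$ stays bounded)—after which one can either transfer your PGF limit from $\tau_n'$ to $\sigma_n'$ or fall back on the paper's almost sure argument. As written, the proof establishes only half of the statement.
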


Consider
\ba
M^{(n)}(t)=\max_{1\leq i\leq K^{(n)}(t)}Y^{(n)}_i(t-T_i^n),
\ea
giving the size of the largest clone at time $t$. 
\begin{prop}[Size of largest clone]\label{lc}
Conditioning on $\rho_n<\infty$,
\ba
M^{(n)}(\rho_n)\rightarrow M^*=\max_{1\leq i\leq K^*}Y_i(\xi_i)
\ea
in distribution, as $n\rightarrow\infty$. Here, $\mathbb{P}[M^*\leq k]=\exp\left(-\frac{\theta}{\lambda_A}\mathbb{P}[Y_i(\xi_i)>k]\right)$.
\end{prop}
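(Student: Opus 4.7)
The plan is to deduce Proposition \ref{lc} directly from Proposition \ref{cs} via the elementary identity ``the maximum is at most $k$ iff no clone exceeds $k$''. For each $k\in\mathbb{Z}_{\geq0}$, set $I_k:=\{k+1,k+2,\ldots\}$. Then
\[
\{M^{(n)}(\rho_n)\leq k\}=\{C^{(n)}_{I_k}(\rho_n)=0\},
\]
and likewise $\{M^*\leq k\}=\{C^*_{I_k}=0\}$ with $C^*_{I_k}\sim\mathrm{Poisson}\bigl(\tfrac{\theta}{\lambda_A}\mathbb{P}[Y_1(\xi_1)>k]\bigr)$ in the notation of Proposition \ref{cs}.

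First I would invoke Proposition \ref{cs} with $I=I_k$. Since the limit is a Poisson variable, $0$ is a continuity point of its distribution function, so conditioning on $\rho_n<\infty$,
\[
\mathbb{P}\bigl[M^{(n)}(\rho_n)\leq k\bigr]=\mathbb{P}\bigl[C^{(n)}_{I_k}(\rho_n)=0\bigr]\;\longrightarrow\;\exp\!\left(-\tfrac{\theta}{\lambda_A}\mathbb{P}[Y_1(\xi_1)>k]\right)
\]
as $n\to\infty$, for every $k\in\mathbb{Z}_{\geq 0}$. Next I would verify that this limit equals $\mathbb{P}[M^*\leq k]$: conditioning on $K^*=m$ and using the independence of $K^*$ and the i.i.d. sequence $(Y_i(\xi_i))_{i\geq1}$ gives
\[
\mathbb{P}[M^*\leq k]=\sum_{m=0}^\infty e^{-\theta/\lambda_A}\frac{(\theta/\lambda_A)^m}{m!}\,\mathbb{P}[Y_1(\xi_1)\leq k]^m=\exp\!\left(-\tfrac{\theta}{\lambda_A}\mathbb{P}[Y_1(\xi_1)>k]\right),
\]
using (with the empty-max convention $M^*=0$ on $\{K^*=0\}$) that the $m=0$ term contributes $1$.

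Finally I would upgrade this pointwise convergence to convergence in distribution. Both $M^{(n)}(\rho_n)$ and $M^*$ are $\mathbb{Z}_{\geq 0}$-valued; $M^*$ is almost surely finite because $K^*<\infty$ a.s.\ and $Y_1(\xi_1)<\infty$ a.s., equivalently because the above formula tends to $1$ as $k\to\infty$. Since every real continuity point of the distribution function of $M^*$ is non-integer, convergence of the distribution functions at all non-negative integers is exactly convergence in distribution on $\mathbb{Z}_{\geq0}$.

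There is no substantive obstacle: the Poissonisation of clone sizes already packaged by Proposition \ref{cs} contains all of the distributional information required for the extremum. The only point warranting a brief sanity check is the a.s.\ finiteness of $M^*$, noted above, which ensures that mass does not escape to $+\infty$ in the limit.
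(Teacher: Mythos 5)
Your argument is correct, but it reaches Proposition \ref{lc} by a genuinely different route from the paper. The paper proves Propositions \ref{lpsml}, \ref{cs} and \ref{lc} in one stroke via an almost-sure coupling: all models with mutation rates $\nu_n$ are realised on a single probability space from one wildtype process $A(\cdot)$, one unit Poisson process $N(\cdot)$ and one family of clones $Y_i(\cdot)$, so that (conditioned on $W>0$) $\rho_n+t-T_i^{(n)}\to t-T_i^*$ almost surely, the $Y_i$ are almost surely continuous at $-T_i^*$, $K^{(n)}(\rho_n)$ is eventually constant, and hence $M^{(n)}(\rho_n)\to\max_{1\leq i\leq K^*}Y_i(-T_i^*)$ almost surely; Lemmas \ref{conditioning} and \ref{we} then convert the conditioning on $\{W>0\}$ into conditioning on $\{\rho_n<\infty\}$. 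You instead take Proposition \ref{cs} as given and deduce Proposition \ref{lc} from the duality $\{M^{(n)}(\rho_n)\leq k\}=\{C^{(n)}_{I_k}(\rho_n)=0\}$ with $I_k=\{k+1,k+2,\ldots\}$. This is a clean, purely distributional reduction: it makes the closed form for $\mathbb{P}[M^*\leq k]$ transparent and needs no new probabilistic input, at the price of inheriting the coupling implicitly through \ref{cs} and of yielding only marginal convergence of $M^{(n)}(\rho_n)$ rather than the joint almost-sure convergence with the clone counts and mutation times that the paper's construction delivers for free. One small correction: $0$ is an atom, not a continuity point, of the Poisson distribution function, so to pass from weak convergence of $C^{(n)}_{I_k}(\rho_n)$ to $\mathbb{P}[C^{(n)}_{I_k}(\rho_n)=0]\to\mathbb{P}[C^*_{I_k}=0]$ you should evaluate the distribution functions at $1/2$, or equivalently note that weak convergence of $\mathbb{Z}_{\geq0}$-valued random variables implies convergence of every point mass. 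Your remaining checks, that the limiting distribution function tends to $1$ as $k\to\infty$ and that convergence at integer arguments suffices for integer-valued weak convergence, are correct and are exactly the points that need verifying on this route.
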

For an example we take the simplest choice of mutant cell growth: $\beta_B=0$ and $\alpha_B=\lambda_A$. The number of clones above size $k$ is 
\ba
C^*_{\{i\in\mathbb{N}:i\geq k\}}\sim\mathrm{Poisson}\left(\frac{\theta}{\lambda_Ak}\right).
\ea
The size of the largest clone is
\ba
\mathbb{P}[M^*\leq k]=\exp\left(-\frac{\theta}{\lambda_A(k+1)}\right).
\ea

\begin{remark}
In this section we have considered a limit in which the product of the population size and mutation rate, $\theta=n\nu$, remains finite. It should be noted that alternative limits are also possible here. For example, Kessler and Levine \cite{kl1} investigate large $\theta$. In a different twist, Hamon and Ycart \cite[Theorem 1.1]{hy} take the initial population size to infinity, the time of measurement to infinity, and the mutation rate to zero.
\end{remark}

\section{Large time small mutation limit}
\label{sec:ltsm}Here we investigate results similar to Section \ref{sec:lpsm}, but with a view to approximating the process at a fixed time rather than population size. Let $(t_n)$ be a sequence of non-random times converging to infinity, and $(\nu_n)$ a sequence of mutation rates satisfying
\[\lim_{n\rightarrow\infty}e^{\lambda_At_n}\nu_n=\eta,\]
for some $\eta\in(0,\infty)$. For each $n\in\mathbb{N}$ consider the two-type model with mutation rate $\nu_n$. We use the superscript $(n)$ notation established in Section \ref{sec:lpsm}.
\begin{prop}[Mutation times]\label{ltsmt}
As $n\rightarrow\infty$,
\[K^{(n)}(t_n+t)\rightarrow K^\circ(t)\]
in finite dimensional distributions. $K^\circ(t)$ is a Cox process on $\mathbb{R}$ with intensity $W\eta e^{\lambda_At}$, where $W$ is distributed as (\ref{classic}).
\end{prop}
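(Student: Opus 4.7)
The plan is to exploit the Cox process structure of $K^{(n)}$ and pass to the limit through Laplace functionals. Since the wildtype dynamics do not depend on the mutation rate, I would first couple the processes $A^{(n)}(\cdot)$ to a single birth--death process $A(\cdot)$, so that $K^{(n)}$ is the Cox process directed by $\nu_n A(\cdot)$. The classical limit (\ref{classic}) gives $e^{-\lambda_A u} A(u) \to W$ almost surely, and because $u \mapsto e^{-\lambda_A u} A(u)$ is c\`adl\`ag with a finite limit at infinity, the sample-path supremum $C := \sup_{u \geq 0} e^{-\lambda_A u} A(u)$ is almost surely finite.

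The core step is the almost sure convergence of the intensity integrals. For any fixed $s < t$, I would rewrite
\[
\nu_n \int_{t_n + s}^{t_n + t} A(u)\, du = \bigl(\nu_n e^{\lambda_A t_n}\bigr) \int_s^t e^{\lambda_A v}\, e^{-\lambda_A(t_n + v)} A(t_n + v)\, dv.
\]
The prefactor tends to $\eta$ by hypothesis; the integrand converges pointwise to $W e^{\lambda_A v}$ and is dominated by the integrable function $C\, e^{\lambda_A v}$, so dominated convergence applied $\omega$-wise yields
\[
\nu_n \int_{t_n + s}^{t_n + t} A(u)\, du \;\longrightarrow\; \frac{\eta W}{\lambda_A}\bigl(e^{\lambda_A t} - e^{\lambda_A s}\bigr) \quad \text{almost surely.}
\]

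Finite-dimensional convergence then follows from a standard Cox process Laplace functional calculation. For disjoint intervals $(s_j, t_j]$, $j = 1,\dots,k$, and constants $z_j \geq 0$, conditioning on $A$ first gives
\[
\mathbb{E}\exp\!\Bigl(-\textstyle\sum_j z_j \bigl[K^{(n)}(t_n + t_j) - K^{(n)}(t_n + s_j)\bigr]\Bigr) = \mathbb{E}\exp\!\Bigl(-\textstyle\sum_j (1 - e^{-z_j})\, \nu_n\! \int_{t_n + s_j}^{t_n + t_j}\! A(u)\, du\Bigr),
\]
and bounded convergence (the outer expression lies in $[0,1]$) carries the a.s.\ limit through to
\[
\mathbb{E}\exp\!\Bigl(-\textstyle\sum_j (1 - e^{-z_j})\, \tfrac{W \eta}{\lambda_A}\bigl(e^{\lambda_A t_j} - e^{\lambda_A s_j}\bigr)\Bigr),
\]
which is the Laplace functional of the Cox process $K^\circ$ of intensity $W \eta e^{\lambda_A t}$ evaluated on those intervals. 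Since joint Laplace transforms determine joint laws on $\mathbb{Z}_{\geq 0}^k$, this yields the claimed finite-dimensional convergence.

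The main obstacle I anticipate is precisely the a.s.\ convergence of the intensity integrals, which in turn hinges on the almost sure boundedness of $e^{-\lambda_A u} A(u)$ on $[0,\infty)$; this is a mild but essential strengthening of (\ref{classic}). Once that is established the Laplace functional manipulation is routine, and nothing beyond the coupling, (\ref{classic}), and dominated convergence is required.
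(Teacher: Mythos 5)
Your proposal is correct, and its analytic core coincides with the paper's: both couple the $A^{(n)}$ to a single wildtype process $A(\cdot)$, factor the intensity as $\nu_n A(t_n+s)=\bigl(\nu_n e^{\lambda_A t_n}\bigr)\,e^{-\lambda_A(t_n+s)}A(t_n+s)\,e^{\lambda_A s}$, and use the almost sure finiteness of $\sup_{u\geq0}e^{-\lambda_A u}A(u)$ together with dominated convergence to get almost sure convergence of the intensity integrals to $\tfrac{\eta W}{\lambda_A}\bigl(e^{\lambda_A t}-e^{\lambda_A s}\bigr)$. Where you diverge is the final step: the paper also couples the driving unit-rate Poisson process $N$ across $n$, writing $K^{(n)}(t_n+t)=N\bigl(\int_{-t_n}^{t}\nu_n A(t_n+s)\,ds\bigr)$, and concludes \emph{almost sure} convergence to $K^\circ(t)=N\bigl(\int_{-\infty}^t W\eta e^{\lambda_A s}\,ds\bigr)$ using the a.s.\ continuity of $N$ at the limiting argument; this stronger mode of convergence is what lets the paper dispose of Proposition \ref{ltsml} "by continuity" afterwards. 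Your route instead freezes the coupling at the level of $A$ and passes to the limit through Cox--Laplace functionals with bounded convergence, which is routine and delivers exactly the fdd convergence claimed here, though it would need to be supplemented (e.g.\ by joint Laplace functionals with the clone sizes, or by reverting to the pathwise coupling) to obtain Proposition \ref{ltsml}. One small point to make explicit: the finite-dimensional laws of $K^{(n)}(t_n+\cdot)$ require not only increments over windows $(t_n+s_j,t_n+t_j]$ with fixed $s_j$, but also the initial count $K^{(n)}(t_n+t_1)=N\bigl(\int_0^{t_n+t_1}\nu_n A\bigr)$, i.e.\ the case $s=-t_n\to-\infty$; your domination by $C e^{\lambda_A v}$, which is integrable on $(-\infty,t]$ since $\lambda_A>0$, covers this without change.
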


A direct consequence of Proposition \ref{ltsmt} is that for each $i\in\mathbb{N}$,
\[T_i^{(n)}-t_n\rightarrow T^\circ_i:=\inf\{t\in\mathbb{R}:K^\circ(t)=i\}\]
in distribution, as $n\rightarrow\infty$.
\begin{prop}[Number of mutants]\label{ltsml}
As $n\rightarrow\infty$,
\[B^{(n)}(t_n+t)\rightarrow B^\circ(t)=\sum_{i=1}^{K^\circ(t)}Y_i(t-T^\circ_i),\]
in finite dimensional distributions. The $Y_i(\cdot)$ are independent of $K^\circ(\cdot)$.
\end{prop}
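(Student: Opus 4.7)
The plan is to prove joint convergence of probability generating functions, conditioning on the wildtype history. I would couple the models for all $n$ on one probability space, using a single wildtype process $(A(t))_{t \geq 0}$ (whose law does not depend on $\nu_n$) and a single i.i.d.\ sequence of clones $(Y_i(\cdot))_{i \geq 1}$; only the mutation rate varies with $n$. Under this coupling the classical result (\ref{classic}) gives $e^{-\lambda_A t} A(t) \to W$ almost surely, and since $(e^{-\lambda_A t} A(t))_{t \geq 0}$ is a non-negative martingale converging a.s., its sample paths are bounded, so $\sup_{t \geq 0} e^{-\lambda_A t} A(t) < \infty$ almost surely.

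Fix times $t_1 < \ldots < t_k$ and variables $z_1, \ldots, z_k \in [0,1]$. Conditional on $A(\cdot)$, the mutation times form an inhomogeneous Poisson process with rate $\nu_n A(s)\,ds$, so the Campbell formula for marked Poisson processes gives
\[\mathbb{E}\Big[\prod_{j=1}^k z_j^{B^{(n)}(t_n+t_j)} \,\Big|\, A(\cdot)\Big] = \exp\Big(-\nu_n \int_0^\infty A(s)(1 - \mathbb{E}[F_n(s)])\,ds\Big),\]
where $F_n(s) := \prod_{j : s \leq t_n + t_j} z_j^{Y(t_n+t_j - s)}$ encodes the contribution of a clone born at time $s$. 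After the substitution $u = s - t_n$, the mark expectation becomes independent of $n$, equal to $\tilde\phi(u) := \mathbb{E}\big[\prod_{j : u \leq t_j} z_j^{Y(t_j - u)}\big]$, so the exponent becomes
\[-(\nu_n e^{\lambda_A t_n}) \int_{-t_n}^{t_k} \big(e^{-\lambda_A t_n} A(t_n + u)\big)(1 - \tilde\phi(u))\,du.\]

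As $n \to \infty$, $\nu_n e^{\lambda_A t_n} \to \eta$ by hypothesis and, for each $u$, $e^{-\lambda_A t_n} A(t_n+u) = e^{\lambda_A u} e^{-\lambda_A(t_n+u)} A(t_n+u) \to e^{\lambda_A u} W$ almost surely. Since $\tilde\phi(u) \in [0,1]$, the integrand is dominated by $(\sup_{t \geq 0} e^{-\lambda_A t} A(t))\, e^{\lambda_A u} \mathbf{1}_{u \leq t_k}$, which is a.s.\ integrable on $\mathbb{R}$. Dominated convergence then gives a.s.\ convergence of the conditional PGF to $\exp\big(-\eta W \int_{-\infty}^{t_k} e^{\lambda_A u}(1 - \tilde\phi(u))\,du\big)$, which a parallel Campbell computation identifies as $\mathbb{E}\big[\prod_j z_j^{B^\circ(t_j)} \,\big|\, W\big]$ using the Cox-process description of $K^\circ$ from Proposition \ref{ltsmt}. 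Taking expectations and using bounded convergence of PGFs yields joint PGF convergence, hence the claimed finite-dimensional convergence. The main obstacle is the uniform domination step above: it rests on the a.s.\ boundedness of $e^{-\lambda_A t} A(t)$ over $t \geq 0$, which is the key property enabling the passage to the limit; everything else is standard marked-point-process manipulation.
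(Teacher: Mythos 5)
Your argument is correct, but it takes a genuinely different route from the paper. The paper proves Proposition \ref{ltsml} pathwise: all models are coupled on one probability space through a single wildtype process, a single unit-rate Poisson process $N(\cdot)$ and a single clone sequence, so that (as in the proof of Proposition \ref{ltsmt}) the time-changed mutation counting processes and hence the shifted mutation times converge almost surely; the proposition then ``follows by continuity'', i.e.\ from the almost sure continuity of the cadlag clone paths $Y_i$ at the limiting arguments $t-T_i^\circ$. You instead work at the level of transforms: conditioning on $A(\cdot)$, you apply the marking/Campbell formula to write the joint conditional PGF of $(B^{(n)}(t_n+t_j))_j$ as an exponential functional of the intensity, shift the time variable so the mark expectation $\tilde\phi$ becomes $n$-independent, and pass to the limit by dominated convergence using $\sup_{t\geq0}e^{-\lambda_A t}A(t)<\infty$ --- the same domination the paper uses in Lemma \ref{easytimes}, and which indeed follows already from cadlag paths plus (\ref{classic}) without invoking the martingale property. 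Your approach buys two things: it sidesteps entirely the continuity-of-$Y_i$-at-$T_i^\circ$ step (the clone variable is integrated out before any limit is taken), and it produces the generating function (\ref{mdltsm}) as a by-product of the identification of the limit. What it gives up is the almost sure convergence under the coupling, which the paper reuses directly for the companion statements on clone counts and the largest clone (Propositions \ref{cs} and \ref{lc} and their large-time analogues); with only PGF convergence those would require separate arguments. Both proofs are complete and rest on the same key estimate.
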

Observe that
\bq B^\circ=B^\circ(0)\overset{d}{=}\sum_{i=1}^{K^\circ}Y_i(\xi_i),\label{Bcirc}\eq
where $K^\circ=K^\circ(0)$ conditioned on $W$ is Poisson distributed with mean $W\eta/\lambda_A$. The generating function of $B^\circ$ is
\bq\label{mdltsm}
\mathbb{E}[z^{B^\circ}]&=&\mathbb{E}\left[\exp\left(\frac{W\eta}{\lambda_A}(r(z)-1)\right)\right]\\
&=&\left(\frac{\lambda_A^2-\beta_A\eta(r(z)-1)}{\lambda_A^2-\alpha_A\eta(r(z)-1)}\right)^{A(0)},\nonumber
\eq
where $r(z)$ is the clone size generating function, given by (\ref{clone}).

\begin{remark}\label{plt}For $\lambda_B>0$, the generating function (\ref{mdltsm}) yields the same power-law tail as (\ref{clone}) and (\ref{mutantdist}) (see Remark \ref{pl}):
\[
\lim_{k\rightarrow\infty}k^{1+\lambda_A/\lambda_B}\mathbb{P}[B^\circ=k]=\frac{A(0)\eta}{\lambda_B}(1-q_B)^{1-\lambda_A/\lambda_B}\Gamma(1+\lambda_A/\lambda_B).
\]
 
\end{remark}
The number of clones of a given size and the size of the largest clone can be seen in the large time small mutation limit. Simply replace $K^*$ with $K^\circ$ in Propositions \ref{cs} and \ref{lc}.

Finally, we comment that the large time small mutation limit justifies a common approximation of the model, in which the wildtype population grows as $(We^{\lambda_At})_{t\in\mathbb{R}}$. Here $B^\circ(\cdot)$ corresponds to $Z^*_1(\cdot)$ defined in \cite{dm}, for example.

\section{Finite size results}
\label{sec:finite}

For simplicity, we consider $(A(0),B(0))=(1,0)$ in this section. However, it should not be too difficult to extend to arbitrary initial cell numbers.

We are able to give the distribution of $B(\tau_n)$ in the special case of no wildtype cell death.
\begin{prop}\label{finite}For $\beta_A=0$,
\bq
B(\tau_n)\overset{d}{=}\sum_{i=1}^{n-1}\sum_{j=1}^{K_i(\xi_i)}Y_{i,j}(U_{i,j}\xi_i),\label{yule}
\eq
where $(K_i(t))_{t\geq0}$ are Poisson processes with intensity $\nu$, $Y_{i,j}(\cdot)\overset{d}{=}Y_i(\cdot)$, $\xi_i\sim$Exponential($\alpha_A$), and $U_{i,j}\sim$Uniform[0,1], which are all independent. 
\end{prop}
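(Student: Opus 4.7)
The plan is to attribute each mutation to the wildtype cell on which the transition $A\rightarrow AB$ occurred, and then to exploit a distributional identity (R\'enyi's representation) for the cell ages at $\tau_n$. Because the per-cell mutation rate is $\nu$ and $\beta_A=0$, the Cox mutation process of intensity $\nu A(s)$ will be decomposed, conditionally on the wildtype trajectory, as the superposition of independent rate-$\nu$ Poisson processes along the lifetimes of the $n$ wildtype cells alive at $\tau_n$. Since no cell ever dies, the cell born at $\tau_k$ (with $\tau_1=0$) persists throughout $[\tau_k,\tau_n]$ with age $\zeta_k:=\tau_n-\tau_k$ at $\tau_n$; the cell born at $\tau_n$ has age $0$ and contributes nothing.

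Conditional on $(\zeta_1,\ldots,\zeta_{n-1})$, the number of mutations on the $k$-th cell is Poisson$(\nu\zeta_k)$, which I would rewrite as $K_k(\zeta_k)$ for $K_k$ an independent Poisson process of rate $\nu$; the mutation times are uniform on $[\tau_k,\tau_n]$, so the elapsed time from the $j$-th mutation on cell $k$ to $\tau_n$ is $U_{k,j}\zeta_k$ with $U_{k,j}\sim$Uniform$[0,1]$ independent; and each mutation initiates an independent birth-death clone $Y_{k,j}(\cdot)$. This decomposition yields
\[
B(\tau_n)=\sum_{k=1}^{n-1}\sum_{j=1}^{K_k(\zeta_k)}Y_{k,j}(U_{k,j}\zeta_k),
\]
where the $K_k$, $Y_{k,j}$, $U_{k,j}$ are jointly independent and independent of $(\zeta_1,\ldots,\zeta_{n-1})$.

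The remaining step is to identify the joint law of the age multiset $\{\zeta_1,\ldots,\zeta_{n-1}\}$. Since the Yule gaps satisfy $\tau_{k+1}-\tau_k\sim$Exponential$(k\alpha_A)$ independently, the ordered ages $0<\tau_n-\tau_{n-1}<\tau_n-\tau_{n-2}<\cdots<\tau_n$ have independent successive increments Exponential$((n-1)\alpha_A)$, Exponential$((n-2)\alpha_A)$, \ldots, Exponential$(\alpha_A)$; by R\'enyi's representation this is precisely the joint law of the order statistics of $n-1$ i.i.d.\ Exponential$(\alpha_A)$ variables. Because the expression above is symmetric in $k$, its joint distribution depends only on the age multiset, and replacing the $\zeta_k$ by i.i.d.\ $\xi_k\sim$Exponential$(\alpha_A)$ delivers (\ref{yule}).

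The main obstacle I anticipate is exactly this distributional identity for the $\zeta_k$: a priori they form a dependent nested sum of exponentials with mismatched rates, and it is R\'enyi's representation together with the symmetry of the per-cell mutation decomposition that collapses this into the clean i.i.d.\ form stated in the proposition. The per-cell decomposition of mutations is the other essential input but is immediate from the model's definition.
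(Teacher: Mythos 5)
Your proposal is correct and follows essentially the same route as the paper: decompose the Cox mutation process into independent rate-$\nu$ Poisson processes attached to the individual wildtype cells, reduce $B(\tau_n)$ to a sum of i.i.d.\ per-cell contributions evaluated at the cell ages, and then invoke the fact that $(\tau_n-\tau_k)_{k=1}^{n-1}$ is distributed as the order statistics of $n-1$ i.i.d.\ Exponential$(\alpha_A)$ variables (the paper's Lemma 6.7, which is exactly R\'enyi's representation), together with exchangeability to pass to i.i.d.\ $\xi_i$. The only cosmetic difference is that you prove the age identity directly from the gap distributions, whereas the paper cites it as a classical result.
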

To interpret (\ref{yule}), let's consider a randomly selected type $A$ cell, labelled $i$, of the $n-1$ cells present just before time $\tau_n$. The cell has been alive for time $\xi_i$, and initiated $K_i(\xi_i)$ mutant clones, with mutation times $(1-U_{i,j})\xi_i$ for $j=1,2,..,K_i(\xi_i)$. The clone sizes are $Y_{i,j}(U_{i,j}\xi_i)$.

The mean number of mutant cells at time $\tau_n$ is
\ba
\mathbb{E}[B(\tau_n)]=
  \begin{cases}\frac{(n-1)\nu}{\alpha_A-\lambda_B}, & \lambda_B<\alpha_A;\\
  \infty, & \lambda_B\geq\alpha_A.
\end{cases}
\ea
The generating function of $B(\tau_n)$ is 
\ba\label{btauexact}
\mathbb{E}[z^{B(\tau_n)}]&=&\left[\int_0^\infty\alpha_Ae^{-\alpha_At}\exp\left(\nu t\int_0^1\mathbb{E}[z^{Y_{i,j}(ut)}]-1du\right)dt\right]^{n-1}\\
&=&\left[\frac{1}{1+\frac{\lambda_B\nu}{\alpha_A\alpha_B}}\setlength\arraycolsep{0.5pt}
F\left(\begin{matrix}1,\nu/\alpha_B\\1+\nu/\alpha_B+\alpha_A/\lambda_B\end{matrix};\frac{q_B-z}{q_B-1}\right)\right]^{n-1},
\ea
where $\mathbb{E}[z^{Y_{i,j}(ut)}]$ is given by (\ref{bdgf}). The computation is lengthy but straightforward; one can apply the integral expression \cite[C.8]{ka} for the hypergeometric function, and the identity \cite[C.10]{ka}.
As in Remarks \ref{pl} and \ref{plt}, for $\lambda_B>0$,
\bq\label{infinitemoments}
\lim_{k\rightarrow\infty}k^{1+\alpha_A/\lambda_B}\mathbb{P}[B(\tau_n)=k]\in(0,\infty)
\eq
exists. The limit can be obtained using the method of \cite[Section 6]{ka} (which is based on \cite{fs}), but is too cumbersome to include here. Power-law tails have often appeared in two-type branching processes, but were generally considered to be an artefact of approximation \cite{dm,qz}. 

\begin{remark}Contrary to (\ref{infinitemoments}), moments of $B(\tau_n)$ are finite in the standard semi-deterministic version of the model (e.g. \cite{lc} and \cite{ka}).
\end{remark}

Next, specialising further to neglect wildtype and mutant death, we connect the distributions of the $B(\sigma_n)$ and $B(\tau_n)$.
\begin{lemma}\label{connect}
For $\beta_A=\beta_B=0$, and integers $0\leq k<n$,
\ba
\mathbb{P}[B(\sigma_n)\leq k]=\mathbb{P}[B(\tau_{n-k})\leq k].
\ea
\end{lemma}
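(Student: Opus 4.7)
The plan is to show that, under the pure birth assumption $\beta_A=\beta_B=0$, the two events $\{B(\sigma_n)\le k\}$ and $\{B(\tau_{n-k})\le k\}$ coincide pointwise on the sample space, from which equality of the probabilities is immediate. I expect the entire argument to rest on monotonicity: with no deaths, both $A(\cdot)$ and $B(\cdot)$ are non-decreasing, hence so is the total population $A(\cdot)+B(\cdot)$.

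First I would use the fact that $A+B$ only ever jumps by $+1$ to conclude that $A(\sigma_n)+B(\sigma_n)=n$ exactly. Therefore $B(\sigma_n)\le k$ is equivalent to $A(\sigma_n)\ge n-k$. Because $A$ is non-decreasing, this last inequality holds if and only if $A$ has already hit the level $n-k$ by time $\sigma_n$, i.e.\ $\tau_{n-k}\le\sigma_n$. Similarly $A(\tau_{n-k})=n-k$ (since $A$ jumps only by $+1$), so at time $\tau_{n-k}$ the total population equals $n-k+B(\tau_{n-k})$. Since the total is non-decreasing, $\sigma_n\ge\tau_{n-k}$ if and only if the total at $\tau_{n-k}$ is at most $n$, which in turn is equivalent to $B(\tau_{n-k})\le k$. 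Chaining these equivalences gives
\[
\{B(\sigma_n)\le k\}\;=\;\{\tau_{n-k}\le\sigma_n\}\;=\;\{B(\tau_{n-k})\le k\},
\]
so the probabilities agree.

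There is no real obstacle: the content of the lemma is entirely combinatorial/topological and the probabilistic structure plays no role beyond the monotonicity of the two coordinates. The only point to be careful about is that the identity $A(\sigma_n)+B(\sigma_n)=n$ and $A(\tau_{n-k})=n-k$ genuinely require $+1$-only jumps, and that the equivalence $\tau_{n-k}\le\sigma_n\Leftrightarrow A(\sigma_n)\ge n-k$ uses monotonicity of $A$ in both directions. Both of these fail once $\beta_A$ or $\beta_B$ is positive (the total could oscillate around the level $n$, and $A$ could revisit the level $n-k$ from above), which explains precisely why the pure birth hypothesis cannot be dropped from the statement.
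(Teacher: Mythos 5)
Your argument is correct and is essentially the paper's proof: both establish pointwise equality of the events $\{B(\sigma_n)\le k\}$ and $\{B(\tau_{n-k})\le k\}$ via the exact-hitting identity $A(\sigma_n)+B(\sigma_n)=n$ and the monotonicity of $A$ and $B$ under the pure-birth assumption. The paper phrases it as two implications (the forward direction and its complement) while you chain through the intermediate event $\{\tau_{n-k}\le\sigma_n\}$, but the content is identical.
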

A similar result was given by Janson \cite[Lemma 9.1]{sjt} for a different class of urn models. Although Lemma \ref{connect} can be combined with Proposition \ref{finite} to determine the distribution of $B(\sigma_n)$, it does not seem likely that a tractable explicit expression can be obtained in general. However, for neutral mutations, Angerer was able to solve a recursion for the probabilities $\mathbb{P}[B(\sigma_n)=k]$ \cite[Corollary 2.2]{wa}.
\begin{prop}[Angerer]\label{wafs}For $\alpha_A+\nu=\alpha_B$ and $\beta_A=\beta_B=0$,
\[\mathbb{P}[B(\sigma_n)=k]=\sum_{i=1}^{n-k}(-1)^{n-i}\binom{n-k-1}{i-1}\binom{i\frac{\alpha_A}{\alpha_B}-1}{n-1}.\]
\end{prop}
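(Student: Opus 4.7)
The plan is to reduce to the embedded discrete-time chain, derive a two-term recursion for $\mathbb{P}[B(\sigma_n)=k]$, and verify the claimed formula by induction on $n$. Write $p:=\alpha_A/\alpha_B$ so that $1-p=\nu/\alpha_B$. Because $\beta_A=\beta_B=0$ and $\alpha_A+\nu=\alpha_B$, every cell triggers a new cell at total rate $\alpha_B$, so the total population $A+B$ is a pure Yule process with rate $\alpha_B$ and $\sigma_n$ is the time of its $(n-1)$-st jump. At each jump the reproducing cell is uniform over those present; if it is an A-cell (probability $i/(i+j)$ in state $(i,j)$) the new cell is A with probability $p$ and B with probability $1-p$, and if it is a B-cell the new cell is B. Writing $P(i,j):=\mathbb{P}[(A(\sigma_{i+j}),B(\sigma_{i+j}))=(i,j)]$, this yields
\[
P(i,j)=\frac{(i-1)p}{i+j-1}P(i-1,j)+\frac{i(1-p)+j-1}{i+j-1}P(i,j-1),
\]
with $P(1,0)=1$ and $P(0,\cdot)=P(\cdot,-1)=0$. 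Setting $Q_n(k):=P(n-k,k)$, this becomes a two-term recursion relating $Q_n(k)$ to $Q_{n-1}(k)$ and $Q_{n-1}(k-1)$.

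For the inductive step, I would substitute the claimed alternating sum into both sides and bring every binomial to the common depth $n-2$ via $\binom{ip-1}{n-1}=\binom{ip-1}{n-2}(ip-n+1)/(n-1)$. After multiplying through by $n-1$ and cancelling the common piece $(n-1)\sum_i(-1)^{n-1-i}\binom{n-k-1}{i-1}\binom{ip-1}{n-2}$ from both sides, dividing by $p$ and rearranging reduces the task to the residual identity
\[
\sum_{i=1}^{n-k}(-1)^{n-1-i}(n-k-i)\binom{n-k-1}{i-1}\binom{ip-1}{n-2}=(n-k-1)\sum_{i=1}^{n-k-1}(-1)^{n-1-i}\binom{n-k-2}{i-1}\binom{ip-1}{n-2},
\]
which holds term-by-term via the elementary Pascal consequence $(n-k-i)\binom{n-k-1}{i-1}=(n-k-1)\binom{n-k-2}{i-1}$; the $i=n-k$ term on the left contributes zero since $\binom{n-k-2}{n-k-1}=0$, matching the shorter upper limit on the right.

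For the base case $Q_1(0)=1$ is immediate, and the $k=0$ slice of the recursion, $Q_n(0)=p\,Q_{n-1}(0)$, yields $Q_n(0)=p^{n-1}$. This matches the formula via the finite-difference evaluation $\sum_{j=0}^{n-1}(-1)^{n-1-j}\binom{n-1}{j}g(j)=(n-1)!\,a_{n-1}$ for any polynomial $g$ of degree $n-1$ with leading coefficient $a_{n-1}$, applied to $g(j)=\binom{(j+1)p-1}{n-1}$ whose leading coefficient is $p^{n-1}/(n-1)!$. The main obstacle is not any deep step but the bookkeeping: keeping the index ranges aligned across sums of different lengths, managing sign flips when re-indexing, and correctly handling the $i=n-k$ endpoint. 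Once the right Pascal identity is spotted, the induction closes cleanly and recovers Angerer's formula.
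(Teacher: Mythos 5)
Your proof is correct. Note that the paper itself does not prove this proposition: it is quoted from Angerer \cite[Corollary 2.2]{wa}, with the remark that Angerer ``solved a recursion for the probabilities $\mathbb{P}[B(\sigma_n)=k]$.'' Your argument is in the same spirit as that source (an embedded-chain recursion), but it is a complete, self-contained verification, so it genuinely adds something the paper leaves to a citation. The key structural observation is right where it should be: the hypothesis $\alpha_A+\nu=\alpha_B$, $\beta_A=\beta_B=0$ makes every cell's total event rate equal to $\alpha_B$, so the total population is a Yule process, $\sigma_n$ is its $(n-1)$-st jump time, and the embedded jump chain is the P\'olya-type urn you describe; this yields exactly the recursion $(n-1)P(i,j)=(i-1)p\,P(i-1,j)+\bigl(i(1-p)+j-1\bigr)P(i,j-1)$ with $p=\alpha_A/\alpha_B$. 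I checked the inductive step: writing $(n-k)(1-p)+k-1=n-1-(n-k)p$, reducing all binomials to depth $n-2$, and cancelling the common $(n-1)$-piece does reduce the identity to
\[
\sum_{i=1}^{n-k}(n-k-i)\tbinom{n-k-1}{i-1}(-1)^{n-1-i}\tbinom{ip-1}{n-2}=(n-k-1)\sum_{i=1}^{n-k-1}(-1)^{n-1-i}\tbinom{n-k-2}{i-1}\tbinom{ip-1}{n-2},
\]
which holds term by term via $(n-k-i)\binom{n-k-1}{i-1}=(n-k-1)\binom{n-k-2}{i-1}$, with the $i=n-k$ term vanishing on both sides. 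The base case and the $k=0$ column (finite differences of a degree-$(n-1)$ polynomial giving $p^{n-1}$) also check out, as do the boundary cases $k=n-1$ and the convention $Q_{n-1}(-1)=0$, which is consistent with the empty/vanishing sums in the formula. This is a clean and complete proof.
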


\section{Multiple site model and site frequency spectrum}
\label{sec:sitefreq}

In the case of neutral mutations, we extend the two-type model to consider mutations at multiple sites on the genome.

The overall population $(C(t))_{t\geq0}$ grows as a birth-death branching process. Cells divide and die at rates $a$ and $b$, where $a>b$. Consider $S$ sites, labelled $i\in\{1,..,S\}$. Each cell is labelled by some $(z_1,..,z_S)\in\{0,1\}^S$, where $z_i=1$ corresponds to a mutation at site $i$. Initially there are an arbitrary number of cells all with label $(0,..,0)$.

The mutations are modelled to occur in such a way that the number of mutants with respect to a particular site follows the two-type model. At each division event the parent cell dies, and two daughter cells are produced. The daughter cells inherit the parent's mutations and may receive further mutations. Suppose that site $i$ is not already mutated in the parent cell. With probability $1-\mu$ site $i$ does not receive a mutation in either daughter cell. With probability $\mu$ exactly one of the daughter cells receives a mutation at site $i$.

To state this more precisely, let us consider a parent cell with label $(z_1,..,z_S)\in\{0,1\}^S$ dividing. The two daughter cells have labels $(Z^{[1]}_1,..,Z^{[1]}_S)$ and $(Z^{[2]}_1,..,Z^{[2]}_S)$, where for each $i$
\ba
(Z^{[1]}_i,Z^{[2]}_i)=\begin{cases}(z_i,z_i),\quad&\text{probability }1-\mu;\\
(\min\{z_i+1,1\},z_i),\quad&\text{probability }\mu/2;\\
(z_i,\min\{z_i+1,1\}),\quad&\text{probability }\mu/2.
\end{cases}
\ea
\begin{remark}For our purposes, we do not need to specify the joint distribution of $(Z^{[1]}_i,Z^{[2]}_i)_{i=1}^S$.
\end{remark}
\begin{remark}As in Kendall's model, we neglect back mutations, and neglect the event that a cell division sees both daughter cells receiving the same mutation (see \cite{kwb} for a biological justification).
\end{remark}
For each $i\in\{1,..,S\}$, let $B_i(t)$ be the number of cells at time $t$ with $z_i=1$.

Now we establish the connection between the multiple-site model and two-type model.
Put $\alpha_A+\nu=\alpha_B=a$, $\beta_A=\beta_B=b$, $\nu=\mu a$. Then for each $i$,
\[(C(t)-B_i(t),B_i(t))_{t\geq0}\overset{d}{=}(A(t),B(t))_{t\geq0}.\]

The site frequency spectrum is defined to be the number of sites who see mutations in a given number of cells, i.e. the sequence 
\[\left(\sum_{i=1}^S1_{\{B_i(t)=k\}}\right)_{k\in\mathbb{Z}_{\geq0}}.\]
By linearity of expectation, the mean site frequency spectrum is determined by
\bq
\mathbb{E}\left[\sum_{i=1}^S1_{\{B_i(t)=k\}}\right]=S\mathbb{P}[B(t)=k].\label{SFS}
\eq
Antal and Krapivsky \cite{ak} found the distribution of $B(t)$, by solving the Kolmogorov equations. For brevity, we do not state their result. To see the mean site frequency spectrum at a fixed population size, define
\[\sigma_n:=\inf\{t\geq0:C(t)\geq n\},\]
as in the two-type model.
Then for $b=0$ and $C(0)=1$,
\[
\mathbb{E}\left[\sum_{i=1}^S1_{\{B_i(\sigma_n)=k\}}\right]=S\sum_{i=1}^{n-k}(-1)^{n-i}\binom{n-k-1}{i-1}\binom{i\frac{\alpha_A}{\alpha_B}-1}{n-1},
\]
by Proposition \ref{wafs}.

Let's return to the general setting of $b\geq0$ and $C(0)\in\mathbb{N}$. We briefly comment on the long term behaviour of the site frequency spectrum. The number of sites who are mutated in a given number of cells converges to zero: for any $k\geq1$
\[\lim_{t\rightarrow\infty}\sum_{i=1}^S1_{\{B_i(t)=k\}}=0,
\]
almost surely. Let $x\in[0,1)$. The number of sites who are mutated in at least proportion $x$ of the population converges to $S$:
\[\lim_{n\rightarrow\infty}\sum_{i=1}^S1_{\{B_i(\sigma_n)\geq xn\}}=S,
\]
almost surely, due to (\ref{domi}).

Next we look at the large population/time small mutation limits. Take a sequence of mutation probabilities $(\mu_n)_{n\in\mathbb{N}}$. For each $n\in\mathbb{N}$ consider the multiple-site model with mutation probability $\mu_n$, birth rate $a$, and death rate $b_n=b(1-\mu_n)$ (with $\lambda=a-b>0$). Write $C^{(n)}(t)$ for the population size and $B_i^{(n)}(t)$ for number of site $i$ mutants at time $t$. Write
\[\sigma_n':=\inf\{t\geq0:C^{(n)}(t)\geq n\}.\]
The purpose of choosing the sequence of death rates $(b_n)$ in this way is to allow for a straightforward adaptation of the two-type results.

\begin{prop}[Large population small mutation limit]\label{sfsp}
Suppose that $(\mu_n)$ satisfies
\[\lim_{n\rightarrow\infty}n\mu_na=\theta,
\]
for some $\theta\in(0,\infty)$. Then
\[
\lim_{n\rightarrow\infty}\mathbb{E}\left[\sum_{i=1}^S1_{\{B^{(n)}_i(\sigma'_n)=k\}}\big|\sigma'_n<\infty\right]=S\mathbb{P}[B^*=k],
\]
where $B^*$ is distributed according to (\ref{sfsldd}) with $\alpha_A=\alpha_B=a$ and $\beta_A=\beta_B=a$.
\end{prop}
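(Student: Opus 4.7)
The plan is to reduce the multi-site claim to the single-site, two-type large population small mutation limit already established in Proposition \ref{lpsml}. First, by linearity of expectation together with the symmetry of the dynamics across the $S$ sites (so that the marginal law of $B^{(n)}_i(\sigma'_n)$ does not depend on $i$), one writes
\[
\mathbb{E}\left[\sum_{i=1}^S 1_{\{B^{(n)}_i(\sigma'_n)=k\}}\,\big|\,\sigma'_n<\infty\right] = S\,\mathbb{P}[B^{(n)}_1(\sigma'_n)=k \mid \sigma'_n<\infty],
\]
so it suffices to show pointwise convergence of this single-site mass function to $\mathbb{P}[B^*=k]$.

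Second, I invoke the multiple-site/two-type correspondence stated earlier in Section \ref{sec:sitefreq}. Fixing the site $i=1$, the pair $(C^{(n)}(t)-B^{(n)}_1(t),B^{(n)}_1(t))_{t\geq 0}$ is distributed as $(A^{(n)}(t),B^{(n)}(t))_{t\geq 0}$ in a two-type model with the $n$-dependent rates
\[
\alpha_A^{(n)}=a(1-\mu_n),\quad \alpha_B^{(n)}=a,\quad \beta_A^{(n)}=\beta_B^{(n)}=b(1-\mu_n),\quad \nu_n=a\mu_n,
\]
and the multiple-site stopping time $\sigma'_n$ coincides with the total-population time $\inf\{t:A^{(n)}(t)+B^{(n)}(t)\geq n\}$ in the companion two-type process. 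The hypothesis $na\mu_n\to\theta$ translates to $n\nu_n\to\theta$, while $\lambda_A^{(n)}=(a-b)(1-\mu_n)\to\lambda$ and $\lambda_B^{(n)}=a-b(1-\mu_n)\to\lambda$, so the small-mutation asymptotic regime of Section \ref{sec:lpsm} is in force.

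Third, the large population small mutation limit for the number of mutants---Proposition \ref{lpsml}, more precisely its extension in Appendix \ref{sec:app} which allows the underlying two-type rates to vary with $n$---yields that $B^{(n)}(\sigma'_n)$ conditioned on $\sigma'_n<\infty$ converges in distribution to the compound Poisson variable $B^*$ of (\ref{sfsldd}), with limit parameters $\alpha_B=a$, $\beta_B=b$, $\lambda_A=\lambda$. Because all random variables involved take values in $\mathbb{Z}_{\geq 0}$, weak convergence automatically upgrades to pointwise convergence of the probability mass function, giving $\mathbb{P}[B^{(n)}_1(\sigma'_n)=k\mid\sigma'_n<\infty]\to \mathbb{P}[B^*=k]$; multiplying by $S$ closes the argument.

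The main technical obstacle is that the companion two-type rates depend on $n$, so Proposition \ref{lpsml} cannot be cited verbatim; one must appeal to the Appendix \ref{sec:app} extension. Concretely, this amounts to verifying that Theorem \ref{times}'s Poisson-process convergence $K^{(n)}(\sigma'_n+t)\to K^*(t)$ is robust to the $O(\mu_n)$ perturbations of the birth and death rates. This is essentially a uniform-in-parameter argument, checking that the wildtype exponential rate $e^{\lambda_A^{(n)}t}$ converges to $e^{\lambda t}$ uniformly on compact $t$-intervals once properly re-centred about $\sigma'_n$.
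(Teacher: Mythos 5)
Your reduction is the right one and matches the paper's in spirit: reduce by symmetry and linearity to the single-site marginal, identify the companion two-type process with rates $\alpha_A^{(n)}=a(1-\mu_n)$, $\alpha_B^{(n)}=a$, $\beta_A^{(n)}=\beta_B^{(n)}=b(1-\mu_n)$, $\nu_n=a\mu_n$, and then invoke the large population small mutation limit for the number of mutants. You also correctly put your finger on the one real difficulty, namely that the companion two-type rates depend on $n$, so Proposition \ref{lpsml} does not apply verbatim.

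The gap is in how you propose to close that difficulty. The Appendix \ref{sec:app} generalisation does not do what you claim: it relaxes the requirement that $A(\cdot)$ and the $Y_i(\cdot)$ be birth--death processes, but it still works with a \emph{single} fixed wildtype process $A(\cdot)$ and a \emph{single} fixed i.i.d.\ family of clone processes, with the only $n$-dependence entering through the mutation rate $\nu_n$. The whole proof architecture of Section \ref{sec:proofs} (Lemma \ref{easytimes} in particular, which uses $\sup_{t\geq0}A(t)e^{-\lambda_At}<\infty$ and $\sup_n e^{\lambda_A\tau_n'}/A(\tau_n')<\infty$ for one fixed realisation of $A$) relies on everything living on one probability space with almost sure convergence. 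If the wildtype birth and death rates themselves vary with $n$, you have a sequence of distinct processes $A^{(n)}(\cdot)$ and the almost sure argument collapses; your closing remark that "this is essentially a uniform-in-parameter argument" names the missing step rather than supplying it. The paper's device is different and worth noting: it rescales time by the factor $(1-\mu_n)$, which turns the wildtype process into one with $n$-independent rates $a$ and $b$ (so the entire stopping-time and Poisson-limit analysis carries over unchanged), pushes all residual $n$-dependence into the clone processes (birth rate $a/(1-\mu_n)$, death rate $b$), and disposes of that by constructing the clones on a common probability space so that $\tilde{Y}_i^{(n)}(\cdot)\rightarrow\tilde{Y}_i(\cdot)$ almost surely in the Skorokhod sense. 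Undoing the time change at the end costs nothing since $\sigma_n'\overset{d}{=}\tilde{\sigma}_n/(1-\mu_n)$ and only values at stopping times are compared. Without either this rescaling-plus-coupling or an explicit uniformity estimate replacing Lemma \ref{easytimes}, your central step remains an assertion.
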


\begin{prop}[Large time small mutation limit]\label{sfst}
Take a sequence of times $(t_n)$ converging to infinity, with
\[\lim_{n\rightarrow\infty}e^{\lambda t_n}\mu_na=\eta,\]
for some $\eta\in(0,\infty)$. Then
\[
\lim_{n\rightarrow\infty}\mathbb{E}\left[\sum_{i=1}^S1_{\{B^{(n)}_i(t_n)=k\}}\right]=S\mathbb{P}[B^\circ=k],
\]
where $B^\circ$ is distributed according to (\ref{Bcirc}) with $\alpha_A=\alpha_B=a$ and $\beta_A=\beta_B=a$.
\end{prop}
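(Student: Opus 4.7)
The plan is to reduce the statement to a single-site computation via linearity of expectation, translate that computation to the two-type model, and then invoke the large time small mutation limit (Proposition \ref{ltsml}) in the parameter-varying form promised in Appendix \ref{sec:app}. First, by linearity of expectation and the symmetry among sites, the quantity of interest equals $S\,\mathbb{P}[B_1^{(n)}(t_n)=k]$. By the two-type identification from Section \ref{sec:sitefreq}, $(C^{(n)}(t)-B_1^{(n)}(t),B_1^{(n)}(t))\overset{d}{=}(A^{(n)}(t),B^{(n)}(t))$, the latter being Kendall's model with $\alpha_A^{(n)}=a(1-\mu_n)$, $\beta_A^{(n)}=b(1-\mu_n)$, $\nu_n=a\mu_n$, $\alpha_B^{(n)}=a$, $\beta_B^{(n)}=b(1-\mu_n)$. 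In particular $\lambda_A^{(n)}=\lambda(1-\mu_n)$, and all five parameters converge as $n\to\infty$ to the limiting values $\alpha_A=\alpha_B=a$, $\beta_A=\beta_B=b$.

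Next I would verify the rate condition of Proposition \ref{ltsml}. Writing
\[e^{\lambda_A^{(n)} t_n}\nu_n = e^{\lambda t_n}\mu_n a \cdot e^{-\lambda \mu_n t_n},\]
the first factor tends to $\eta$ by assumption, while $\mu_n\sim \eta/(a e^{\lambda t_n})\to 0$ forces $\mu_n t_n\to 0$, so the second factor tends to $1$. Hence $e^{\lambda_A^{(n)} t_n}\nu_n\to\eta$, matching the hypothesis of the large time small mutation limit.

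The subtlety is that $\alpha_A^{(n)},\beta_A^{(n)},\beta_B^{(n)}$ all depend on $n$, whereas Proposition \ref{ltsml} as stated holds the type-$A$ and type-$B$ birth/death parameters fixed. For this reason I would appeal to the generalisation of Proposition \ref{ltsml} in Appendix \ref{sec:app}, which permits birth, death and mutation rates to vary simultaneously provided they converge. Applied here, it yields $B^{(n)}(t_n)\to B^\circ$ in distribution, where $B^\circ$ is the random variable (\ref{Bcirc}) built from the limiting parameters $\alpha_A=\alpha_B=a$, $\beta_A=\beta_B=b$. Since $B^\circ$ takes values in $\mathbb{Z}_{\geq 0}$, convergence in distribution to an integer-valued limit implies pointwise convergence of the probability mass functions, so $\mathbb{P}[B^{(n)}(t_n)=k]\to\mathbb{P}[B^\circ=k]$ for each $k$. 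Multiplying by $S$ completes the proof.

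The main obstacle is purely the bookkeeping for the parameter-varying version of Proposition \ref{ltsml}: one must confirm that the appendix result is broad enough to handle sequences of $\alpha_A^{(n)},\beta_A^{(n)},\beta_B^{(n)}$ tending to fixed limits, not merely a varying mutation rate. The authors' careful choice $b_n=b(1-\mu_n)$ is precisely what makes the correspondence $\alpha_A^{(n)}+\nu_n=\alpha_B^{(n)}$ hold exactly for every $n$, so once the generalisation is in hand the remainder of the argument is routine.
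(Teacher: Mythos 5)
Your reduction to a single site, the identification of the two-type parameters $\alpha_A^{(n)}=a(1-\mu_n)$, $\beta_A^{(n)}=\beta_B^{(n)}=b(1-\mu_n)$, $\alpha_B^{(n)}=a$, $\nu_n=a\mu_n$, and the verification that $e^{\lambda_A^{(n)}t_n}\nu_n\to\eta$ (via $\mu_n t_n\to 0$) all match the paper. The gap is in the step you yourself flag as the ``main obstacle'': you invoke a version of Proposition \ref{ltsml} in which the birth and death rates of both types vary with $n$, and you attribute it to Appendix \ref{sec:app}. The appendix does not supply this. It relaxes the requirement that $A(\cdot)$ and the $Y_i(\cdot)$ be birth--death processes, but it still works with a \emph{single fixed} wildtype process $A(\cdot)$ and a fixed i.i.d.\ family $Y_i(\cdot)$, with only the mutation rate $\nu_n$ depending on $n$; the whole coupling construction in Section \ref{sec:proofs} (one process $A$, one Poisson process $N$, $n$ entering only through $\nu_n$) relies on this. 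So the proposition you appeal to does not exist as stated, and your argument is incomplete at precisely the point where the real work lies.

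The paper closes this gap with a time-rescaling device, and this is where the choice $b_n=b(1-\mu_n)$ earns its keep: since the wildtype birth and death rates both carry the common factor $(1-\mu_n)$, rescaling time by $1/(1-\mu_n)$ turns the wildtype into a single $n$-independent birth--death process $\tilde A(\cdot)$ with rates $a$ and $b$, to which the existing two-type machinery applies verbatim. The residual $n$-dependence is pushed onto the clones, which after rescaling have birth rate $a/(1-\mu_n)$ and death rate $b$; the paper handles these by constructing processes $\tilde Y_i^{(n)}(\cdot)$ on a common probability space converging almost surely in the Skorokhod topology to $\tilde Y_i(\cdot)$, so that the continuity arguments in the proofs of Propositions \ref{lpsml} and \ref{ltsml} go through. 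Finally one undoes the rescaling via $(B_i^{(n)}(t))_{t\ge0}\overset{d}{=}(\tilde B^{(n)}((1-\mu_n)t))_{t\ge0}$ and checks $e^{\lambda t_n/(1-\mu_n)}a\mu_n/(1-\mu_n)\to\eta$. To repair your proof you would need to either carry out this rescaling-plus-coupling argument or genuinely prove the parameter-varying extension of Proposition \ref{ltsml} you assumed; the latter amounts to essentially the same work.
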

\begin{remark}\label{pltsfs}Our approximations for the mean site frequency spectrum have power-law tails:
\[
\lim_{k\rightarrow\infty}k^2S\mathbb{P}[B^*=k]=\frac{S\theta}{\lambda}
\]
and
\[
\lim_{k\rightarrow\infty}k^2S\mathbb{P}[B^\circ=k]=\frac{S\eta C(0)}{\lambda},
\]
which are special cases of Remarks \ref{pl} and \ref{plt}.
\end{remark}

Since the size, rather than age, of a tumour can be observed, we are most interested in the large population small mutation limit. To give the reader an idea of its appearance, in Figure \ref{fig} the mean site frequency spectrum as given by Proposition \ref{sfsp} is plotted. The theoretical result is compared to simulations, with birth, death and scaled mutation rates taken from biological literature. In particular, we consider $a=0.25$ and $b=0.18$ (per day), which were estimated in colorectal cancers by \cite{djla}. According to \cite{jea}, $\theta$ may be of the order of $a$; we consider two different values for $\theta$ in this region. We take a relatively small population size of $n=10^3$ and number of sites $S=50$, so that computation time is reasonable. It is expected that taking larger $n$ and fixed $\theta$ will give an even closer fit between theory and simulations.
\begin{figure}
\caption{Simulated and theoretical expected site frequency spectrum, with $a=0.25$, $b=0.18$, $S=50$, $C(0)=1$, $n=10^3$. Two different mutation rates are plotted: $\mu=10^{-3}$ (left) and $\mu=10^{-2}$ (right). The average has been taken over $10^4$ simulations in each case. }\label{fig}
\includegraphics[width=.49\textwidth]{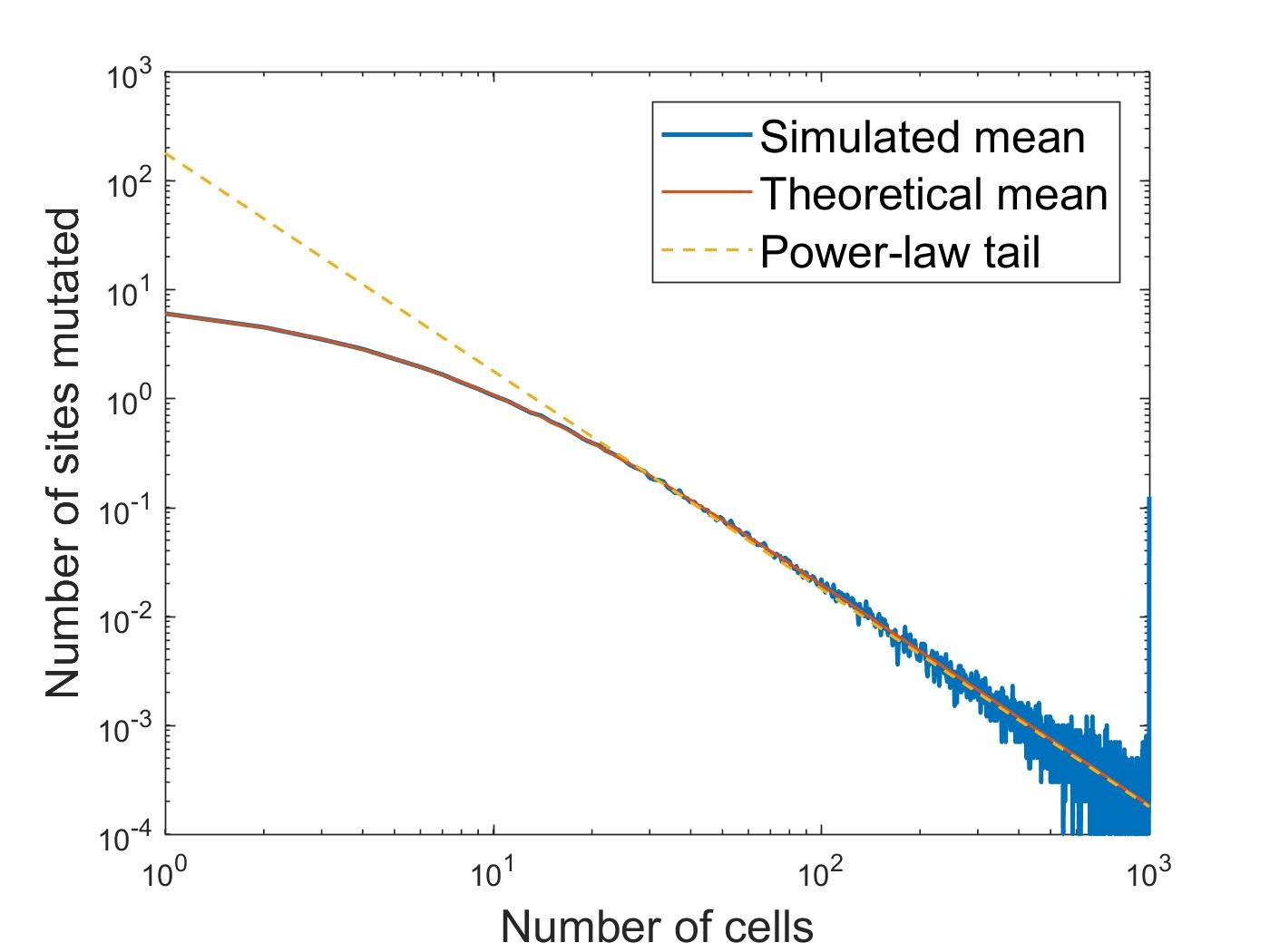}
\includegraphics[width=.49\textwidth]{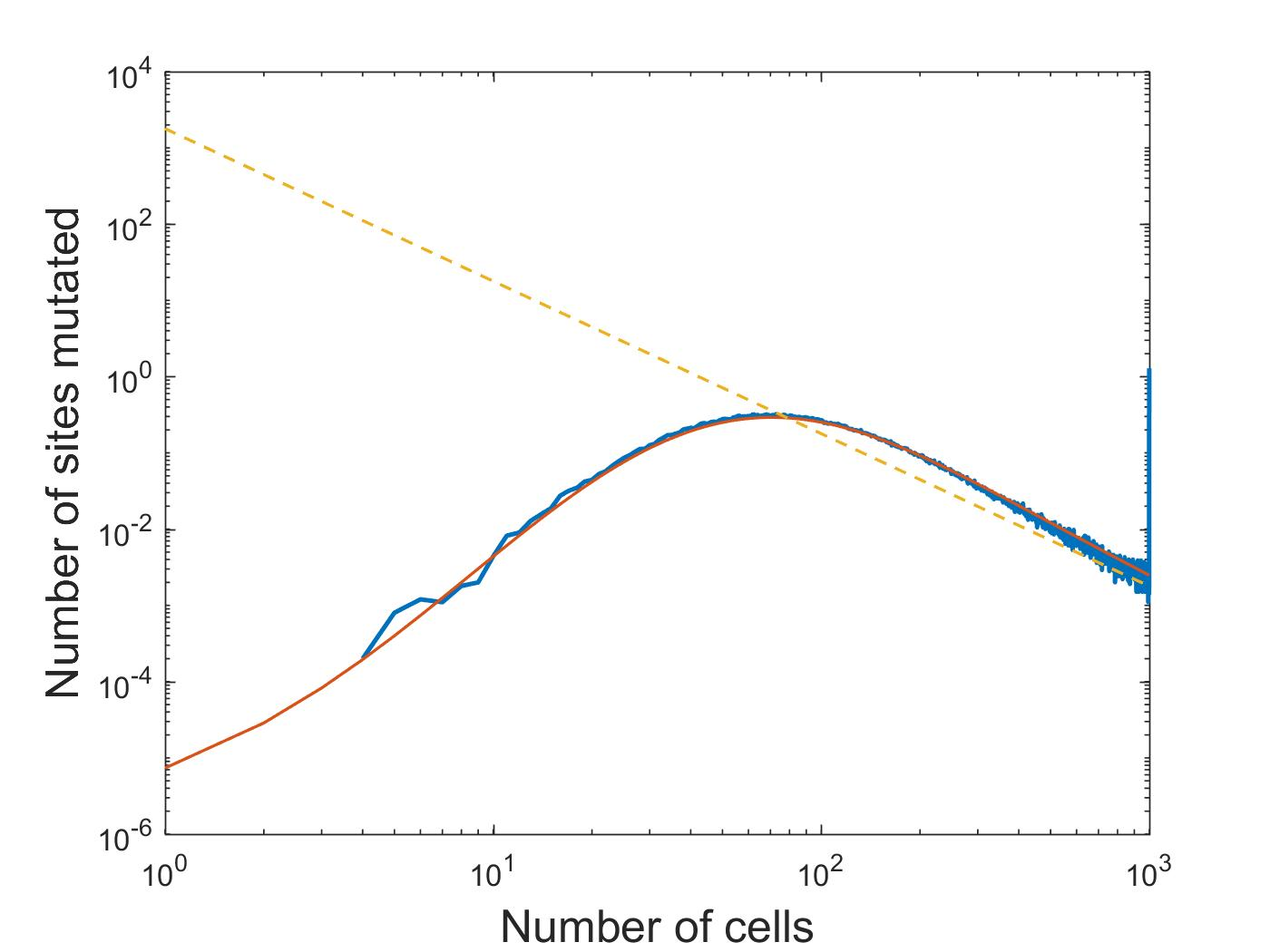}
\end{figure}

\section{Discussion}

\label{sec:sum}

From a single cancer cell a tumour may grow to comprise billions of cells. Mutations can occur at cell divisions, ultimately leading to great genetic diversity within a single tumour. With the advent of next-generation DNA sequencing, vast quantities of cancer genomes have been sequenced. Data has been made publicly available through the Cancer Genome Atlas and International Cancer Genome Consortium, for example. Considerable efforts have been made in recent years to explain observed mutation patterns with mathematical models, and from the observed mutation patterns to infer the evolutionary history of tumours.

Striking examples are Williams et al. \cite{gs} and Bozic et al. \cite{ib}, who consider deterministic and branching process models respectively. They both derive that the expected frequency of mutations occurring in $x$ proportion of cells has density proportional to $x^{-2}$ (away from $0$). In \cite{gs}, 323 out of 904 cancers considered are deemed to fit the $x^{-2}$ power-law. In \cite{ib}, 14 out of 42 cancers are deemed to fit the power-law.

The models of \cite{gs,ib,oi,durr} all used the infinite sites assumption, which states that each site can mutate at most once over the lifetime of a tumour. Statistical analysis of cancer genomic data refutes this assumption \cite{isr}. Furthermore, we make a theoretical argument against the infinite sites assumption in the branching process setting. According to Proposition \ref{times}, the number of times a particular site has mutated before the population size reaches $n$ is approximately Poisson($n\nu/\lambda_A$). Therefore the infinite sites simplification may be appropriate when $n\nu/\lambda_A$ is much smaller than $1$. However \cite{gs} estimated effective mutation rates, $\nu/\lambda_A$, of single base pairs to be in the region of $10^{-7}-10^{-6}$. If a detected tumour comprises $10^8-10^9$ cells (e.g. \cite{ib}), then $n\nu/\lambda_A$ is not sufficiently small.

In Proposition \ref{sfsp}, we have shown that the mean site frequency spectrum can be approximated by a well known generalisation of the Luria-Delbr{\"u}ck distribution. The distribution's $x^{-2}$ tail agrees with theoretical predictions and data in \cite{gs,ib}. But our predictions disagree at the lower end of the frequency spectrum. Due to unreliable data, \cite{gs,ib} did not make a model-data comparison for mutations occurring in less than $10\%$ of cells.

In upcoming work we extend the multiple-site model to non-neutral mutations.

\section{Proofs}
\label{sec:proofs}

\subsection*{Proofs for Section \ref{sec:lt}}
\begin{proof}[Proof of Theorem \ref{largetime}]
For part 2, one needs to observe that
\[
\left(e^{-\lambda_A t}B(t)-te^{-\lambda_A t}\nu A(t)\right)_{t\geq0}
\]
is a martingale with respect to the obvious filtration, and is bounded in $L_2$.

For part 1, the reader may refer to \cite{sj} for a full proof in a more general and notation-heavy setting. For the reader's convenience, we offer the essence of Janson's proof here. Crucially,
\[
\left(M(t)\right)_{t\geq0}=\left(e^{-\lambda_B t}B(t)-\frac{\nu}{\lambda_A-\lambda_B}e^{-\lambda_B t}A(t)\right)_{t\geq0}
\]
is a martingale. Janson obtains bounds for the probabilities
\[
\mathbb{P}\left[\sup_{t\in[n-1,n]}\left|e^{(\lambda_B-\lambda_A)t}M(t)\right|>\epsilon\right],
\]
via Doob's martingale inequality, and then applies the Borel-Cantelli lemma.
\end{proof}
\begin{proof}[Proof of Corollary \ref{sigmacor}, part 2]
First, rewrite
\ba
\frac{\log(n)A(\sigma_n)}{n}&=&\frac{\log\left(A(\sigma_n)+B(\sigma_n)\right)A(\sigma_n)}{A(\sigma_n)+B(\sigma_n)}\\
&=&\frac{1}{\sigma_n}\left[\log\left(\frac{A(\sigma_n)+B(\sigma_n)}{\sigma_ne^{\lambda_A\sigma_n}}\right)+\log(\sigma_n)+\lambda_A\sigma_n\right]\\
&&\times\frac{e^{-\lambda_A\sigma_n}A(\sigma_n)}{\sigma_n^{-1}e^{-\lambda_A\sigma_n}\left(A(\sigma_n)+B(\sigma_n)\right)}.
\ea
Then apply Theorem \ref{largetime} and (\ref{classic}), to take $n\rightarrow\infty$.
\end{proof}
The remaining parts of Corollaries \ref{taucor} and \ref{sigmacor} can be proven in a similar manner.

\subsection*{Proofs for Sections \ref{sec:lpsm} and \ref{sec:ltsm}}~\\

For each $n\in\mathbb{N}$, the joint distribution of
\bq
(A^{(n)}(\cdot), B^{(n)}(\cdot), (Y^{(n)}_i(\cdot))_{i\in\mathbb{N}}, K^{(n)}(\cdot),(T_i^{(n)})_{i\in\mathbb{N}}, \sigma_n',\tau_n')\label{srv}
\eq
has been specified, with respect to the mutation rate $\nu_n$. Note that the distributions of $A^{(n)}(\cdot)$ and $Y_i^{(n)}(\cdot)$ do not depend on $n$. We will construct the sequence (\ref{srv}) ranging over $n\in\mathbb{N}$ on a single probability space $(\Omega,\mathcal{F},\mathbb{P})$ in a way that allows weak convergence to be shown via almost sure convergence.

On $(\Omega,\mathcal{F},\mathbb{P})$ define the independent processes $(A(t))_{t\geq0}$, $(Y_i(t))_{t\geq0}$ for $i\in\mathbb{N}$, and $(N(t))_{t\geq0}$. As one would expect we take $A(\cdot)\overset{d}{=}A^{(n)}(\cdot)$ and $Y_i(\cdot)\overset{d}{=}Y_i^{(n)}(\cdot)$. Take $N(\cdot)$ to be a Poisson counting process with intensity $1$.

Define the mutation counting process by
\[K^{(n)}(t)=N\left(\int_0^t\nu_nA(s)ds\right).\]
The mutation times are given by
\[T_i^{(n)}=\inf\{t\geq0:K^{(n)}(t)=i\}.\]
The total mutant population is
\[B^{(n)}(t)=\sum_{i=1}^{K^{(n)}(t)}Y_i(t-T_i^{(n)}).\]
So the only dependence on $n$ comes from the mutation times.
As before, define
\[\sigma'_n=\inf\{t\geq0:A(t)+B^{(n)}(t)\geq n\}\]
and
\[\tau'_n=\inf\{t\geq0:A(t)\geq n\}.\]

The large population small mutation limit results all involve conditioning on $\sigma_n'<\infty$ or $\tau_n'<\infty$. Lemmas \ref{conditioning} and \ref{we} will demonstrate that the results can be equivalently formulated by instead conditioning on non-extinction of the wildtype population.

\begin{mylem}\label{conditioning}
Suppose that $(E_n)_{n\in\mathbb{N}}$ and $(F_n)_{n\in\mathbb{N}}$ are sequences of events, such that
\begin{enumerate}
\item
$\forall n\in\mathbb{N}(F_n\supset F_{n+1})$,
\item
$\cap_{n\in\mathbb{N}}F_n=F$, and
\item
$\mathbb{P}[F]>0$.
\end{enumerate}

Then
\[
\lim_{n\rightarrow\infty}\mathbb{P}[E_n|F_n]=\lim_{n\rightarrow\infty}\mathbb{P}[E_n|F],
\]
if it exists.
\begin{proof}
Write
\[
\mathbb{P}[E_n|F_n]=\frac{\mathbb{P}[F]}{\mathbb{P}[F_n]}\mathbb{P}[E_n|F]+\frac{\mathbb{P}[E_n\cap F_n\backslash F]}{\mathbb{P}[F_n]},
\]
and take $n\rightarrow\infty$.
\end{proof}
\end{mylem}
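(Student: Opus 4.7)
The plan is to reduce the assertion to a direct algebraic identity combined with continuity of measure. First, I would observe that since $(F_n)$ is a decreasing sequence of events with intersection $F$, continuity of probability from above gives $\mathbb{P}[F_n]\to\mathbb{P}[F]>0$. In particular $F\subset F_n$ for every $n$, which legitimises splitting $E_n\cap F_n$ along $F$ and its complement inside $F_n$.

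The key identity I would write is the decomposition
\[
\mathbb{P}[E_n\cap F_n]=\mathbb{P}[E_n\cap F]+\mathbb{P}[E_n\cap(F_n\setminus F)],
\]
which, dividing through by $\mathbb{P}[F_n]$, yields
\[
\mathbb{P}[E_n\mid F_n]=\frac{\mathbb{P}[F]}{\mathbb{P}[F_n]}\,\mathbb{P}[E_n\mid F]+\frac{\mathbb{P}[E_n\cap(F_n\setminus F)]}{\mathbb{P}[F_n]}.
\]
The first factor on the right tends to $1$ by the continuity argument above. The remainder is bounded in absolute value by $\mathbb{P}[F_n\setminus F]/\mathbb{P}[F_n]=(\mathbb{P}[F_n]-\mathbb{P}[F])/\mathbb{P}[F_n]$, which also tends to $0$ since $\mathbb{P}[F]>0$ keeps the denominator bounded away from zero for all large $n$.

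Combining these two facts, and using the hypothesis that $\lim_{n\to\infty}\mathbb{P}[E_n\mid F]$ exists, I conclude that $\lim_{n\to\infty}\mathbb{P}[E_n\mid F_n]$ exists and equals this same limit.

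There is really no substantive obstacle here: the assumption $\mathbb{P}[F]>0$ prevents any $0/0$ issue, and monotonicity of $(F_n)$ together with $\cap_n F_n=F$ immediately supplies everything through continuity of measure. The only mild subtlety worth flagging explicitly is that the existence of the limit is assumed rather than established, so the argument is content to propagate the limit through the decomposition rather than prove convergence from scratch.
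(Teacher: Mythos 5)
Your proof is correct and uses exactly the same decomposition as the paper's, namely $\mathbb{P}[E_n\mid F_n]=\frac{\mathbb{P}[F]}{\mathbb{P}[F_n]}\mathbb{P}[E_n\mid F]+\frac{\mathbb{P}[E_n\cap(F_n\setminus F)]}{\mathbb{P}[F_n]}$, merely spelling out the continuity-of-measure and error-bound details that the paper leaves to the reader. No differences of substance.
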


\begin{lemma}\label{we}
\[\{W>0\}=\cap_{n\in\mathbb{N}}\{\tau_n'<\infty\}=\cap_{n\in\mathbb{N}}\{\sigma_n'<\infty\},\]
where $W$ is given by (\ref{classic}).
\begin{proof}
That $\{W>0\}\subset\cap_{n\in\mathbb{N}}\{\tau_n'<\infty\}$ and $\cap_{n\in\mathbb{N}}\{\tau_n'<\infty\}\subset\cap_{n\in\mathbb{N}}\{\sigma_n'<\infty\}$ should be clear. We show that
\[\cap_{n\in\mathbb{N}}\{\sigma_n'<\infty\}\subset\{W>0\}.\]
Indeed, fix $\omega\in\{W=0\}=\{\exists t\geq0,A(t)=0\}$.

Then
\[\int_0^\infty A(t)dt<\infty.\]
So one can choose sufficiently large $n\in\mathbb{N}$ such that both
\[\nu_n\int_0^\infty A(t)dt<\sup\{t\geq0:N(t)=0\},\]
and
\[\tau_n'=\infty.\]
In this case we must have
\[\omega\in\{\forall t\geq0,K^{(n)}(t)=0\}\cap\{\tau_n'=\infty\}\subset\{\sigma_n'=\infty\}.\]
\end{proof}
\end{lemma}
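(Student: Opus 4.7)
My plan is to prove the three-way equality by a cyclic chain of inclusions, exploiting the fact that all of the processes $A(\cdot)$, $B^{(n)}(\cdot)$, $K^{(n)}(\cdot)$ have been coupled on a single probability space via the common Poisson process $N(\cdot)$ through $K^{(n)}(t)=N\!\left(\int_0^t\nu_n A(s)ds\right)$. Two of the inclusions are essentially free: on the event $\{W>0\}$ the classical limit (\ref{classic}) forces $A(t)\to\infty$, so $A$ must cross every level and $\tau_n'<\infty$ for every $n$; and $\sigma_n'\leq\tau_n'$ is immediate from $A(t)\leq A(t)+B^{(n)}(t)$, giving $\cap_n\{\tau_n'<\infty\}\subset\cap_n\{\sigma_n'<\infty\}$.

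The real content is the reverse inclusion $\cap_n\{\sigma_n'<\infty\}\subset\{W>0\}$, which I would handle by contrapositive. Fix $\omega\in\{W=0\}$. Since $\{W=0\}$ coincides almost surely with the wildtype extinction event (the remark after (\ref{classic})), we may assume $A(t,\omega)=0$ for all $t$ beyond some finite random time, so $J(\omega):=\int_0^\infty A(s,\omega)\,ds$ is finite and $\tau_n'(\omega)=\infty$ for all large $n$. The key observation is that the time-change argument decouples the dependence on $n$: let $\zeta(\omega):=\inf\{s\geq 0:N(s,\omega)=1\}$ be the first jump time of $N$, which is almost surely strictly positive. Because $\nu_n\to 0$ (immediate from $n\nu_n\to\theta\in(0,\infty)$, and likewise in the Section \ref{sec:ltsm} setup where $\nu_n\sim\eta e^{-\lambda_A t_n}$), we can pick $n$ large enough that $\nu_n J(\omega)<\zeta(\omega)$. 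For such $n$, the process $K^{(n)}(\cdot,\omega)$ is identically zero, so $B^{(n)}(\cdot,\omega)\equiv 0$, and therefore $\sigma_n'(\omega)=\tau_n'(\omega)=\infty$. Hence $\omega\notin\cap_n\{\sigma_n'<\infty\}$.

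The only delicate point is the justification that $\{W=0\}$ coincides (a.s.) with the extinction event for $A$; this is standard for supercritical linear birth-death processes and is already noted in the paper. Beyond that, the argument is entirely a bookkeeping exercise made possible by the coupled construction: it is critical that a \emph{single} $N$ drives every $K^{(n)}$, since otherwise one could not rule out the alternative scenario that some $B^{(n)}$ explodes on the $A$-extinction event via a lucky early mutation. With the coupling in place, the positivity of $\zeta$ and the decay of $\nu_n$ close the argument immediately.
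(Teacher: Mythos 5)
Your proposal is correct and follows essentially the same route as the paper's proof: the two easy inclusions, then the contrapositive on $\{W=0\}$ using finiteness of $\int_0^\infty A(s)\,ds$, the a.s.\ positivity of the first jump time of the single coupled Poisson process $N$, and $\nu_n\to0$ to force $K^{(n)}\equiv0$ and hence $\sigma_n'=\tau_n'=\infty$ for large $n$.
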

We will now prove the results of Section \ref{sec:lpsm} by conditioning on $W>0$.

\begin{mylem}\label{easytimes}
Conditioning on $W>0$, as $n\rightarrow\infty$,
\[K^{(n)}(\tau_n'+t)=N\left(\int_{-\tau_n'}^t\nu_nA(\tau_n'+s)ds\right)\rightarrow N\left(\int_{-\infty}^t\theta e^{\lambda_As}ds\right)=:K^*(t)\]
and
\[T_i^{(n)}-\tau_n'\rightarrow T_i^*=\inf\{t\in\mathbb{R}:K^*(t)=i\}\]
almost surely, for each $t\in\mathbb{R}$.
\begin{proof}

That $A(\cdot)$ is cadlag and satisfies (\ref{classic}), are enough to see that
\[
\sup_{t\geq0}\frac{A(t)}{e^{\lambda_At}}<\infty,
\]
and
\[
\sup_{n\in\mathbb{N}}\frac{e^{\lambda_A\tau_n'}}{A(\tau_n')}<\infty
\]
almost surely. Now write
\ba
\nu_nA(\tau_n'+t)=n\nu_n\frac{A(\tau_n'+t)}{e^{\lambda_A(\tau_n'+t)}}\frac{e^{\lambda_A\tau_n'}}{A(\tau_n')}e^{\lambda_At}.
\ea
It becomes apparent that
\[\lim_{n\rightarrow\infty}\nu_nA(\tau_n'+t)= \theta e^{\lambda_At},\]
and for all $t\in\mathbb{R}$
\[\sup_{n\in\mathbb{N}}\nu_nA(\tau_n'+t)\leq Le^{\lambda_At}\label{t}\]
almost surely, for some positive random variable $L$. Then, using dominated convergence and the fact that $N(\cdot)$ is almost surely continuous at $\int_{-\infty}^t\theta e^{\lambda_As}ds$, we are done.
\end{proof}
\end{mylem}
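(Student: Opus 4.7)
The plan is to reduce everything to the classical almost-sure exponential growth (\ref{classic}) plus a dominated convergence argument, using the Poisson time-change representation $K^{(n)}(t) = N(\int_0^t \nu_n A(s)\,ds)$ that has already been set up on the common probability space.

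First I would establish the pointwise convergence $\nu_n A(\tau_n' + s) \to \theta e^{\lambda_A s}$ almost surely on $\{W>0\}$. The key is the factorisation
\[
\nu_n A(\tau_n'+s) = (n\nu_n)\cdot\frac{A(\tau_n'+s)}{e^{\lambda_A(\tau_n'+s)}}\cdot\frac{e^{\lambda_A\tau_n'}}{A(\tau_n')}\cdot\frac{A(\tau_n')}{n}\cdot e^{\lambda_A s}.
\]
By (\ref{nucond}), $n\nu_n\to\theta$. Since $\tau_n'\to\infty$ on $\{W>0\}$ (this uses the classical limit plus $A(\tau_n')\ge n$), the second factor tends to $W$ and the reciprocal ratio $e^{\lambda_A\tau_n'}/A(\tau_n')$ tends to $1/W$, which is finite because we are conditioning on $W>0$. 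The unit upward jumps of $A$ give $A(\tau_n')/n\to 1$. Multiplying yields the desired limit $\theta e^{\lambda_A s}$.

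The main technical step is then lifting the pointwise limit to a limit for the integral $\int_{-\tau_n'}^{t}\nu_n A(\tau_n'+s)\,ds$. I would apply dominated convergence on $(-\infty,t]$, with the dominating function $s\mapsto L e^{\lambda_A s}$ constructed from the same factorisation: $\sup_{u\ge 0} A(u)/e^{\lambda_A u}$ is a.s.\ finite (this is the standard observation that the martingale $e^{-\lambda_A u}A(u)$ is $L^1$-bounded, hence a.s.\ bounded as well), the ratio $e^{\lambda_A\tau_n'}/A(\tau_n')$ is bounded uniformly in $n$ by its convergence to $1/W<\infty$, and $n\nu_n$ and $A(\tau_n')/n$ are both bounded. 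This produces a uniform bound independent of $n$ whose integral over $(-\infty,t]$ is finite, legitimising dominated convergence. Combining this with $\tau_n'\to\infty$ gives
\[
\int_{-\tau_n'}^{t}\nu_n A(\tau_n'+s)\,ds \;\longrightarrow\; \int_{-\infty}^{t}\theta e^{\lambda_A s}\,ds \quad\text{a.s.}
\]

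The last ingredient is to push the limit through the outer Poisson process $N$. Since $N$ is independent of $A(\cdot)$ and hence of the (random) limit value $(\theta/\lambda_A)e^{\lambda_A t}$, and since $N$ has only countably many jumps, $N$ is almost surely continuous at that limit, so $K^{(n)}(\tau_n'+t)\to K^*(t)$ a.s. The claimed convergence of the jump times $T_i^{(n)}-\tau_n'\to T_i^*$ follows because the $i$th jump time of a Poisson counting process is a continuous functional of the process at integer-valued levels when the limit process has distinct (and finite, on $\{W>0\}$) jump times. The main obstacle I anticipate is articulating the uniform domination cleanly on the event $\{W>0\}$ — the bound is a random constant depending on $W$, and one has to verify it is finite almost surely rather than in expectation — but this is handled by writing the bound as a product of factors each of which has been shown to be a.s.\ convergent, hence a.s.\ bounded in $n$.
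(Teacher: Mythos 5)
Your proposal is correct and follows essentially the same route as the paper: the same factorisation of $\nu_n A(\tau_n'+s)$ into a.s.\ convergent ratios, a dominating function of the form $Le^{\lambda_A s}$ built from $\sup_{u\ge0}A(u)/e^{\lambda_A u}<\infty$ and the boundedness of $e^{\lambda_A\tau_n'}/A(\tau_n')$, dominated convergence for the integral, and the a.s.\ continuity of the independent Poisson process $N$ at the limiting integral value. The only cosmetic difference is your extra factor $A(\tau_n')/n$, which equals $1$ here because the birth-death process has unit jumps (the paper only needs this factor in the generalised setting of its appendix).
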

Lemma \ref{easytimes} corresponds to Theorem \ref{times} in the case $(\rho_n)=(\tau_n')$. Lemmas \ref{suplemma} and \ref{tslemma} extend the result to $(\rho_n)=(\sigma_n')$.
\begin{mylem}\label{suplemma}Conditioning on $W>0$,
\[\sup_{n\in\mathbb{N}}B^{(n)}(\sigma_n')<\infty\]
almost surely.
\begin{proof}
For each $n\in\mathbb{N}$ consider the process
\[\hat{B}^{(n)}(t)=\sum_{i=1}^{K^{(n)}(t)}\hat{Y}_i(t-T_i^{(n)}),\]
where
\[\hat{Y}_i(t)=\sup_{s\in[0,t]}Y_i(s).\]
With probability $1$,
\[\lim_{n\rightarrow\infty}\hat{B}^{(n)}(\tau_n')\in[0,\infty)\]
exists, by Lemma \ref{easytimes} and the almost sure continuity of the $\hat{Y}_i$ at $-T^*_i$. Finally,
\[B^{(n)}(\sigma_n')\leq\hat{B}^{(n)}(\sigma_n')\leq\hat{B}^{(n)}(\tau_n')\leq\sup_{n\in\mathbb{N}}\hat{B}^{(n)}(\tau_n')<\infty,\]
using the monotonicity of $\hat{B}^{(n)}(\cdot)$.
\end{proof}
\end{mylem}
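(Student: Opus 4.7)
The plan is to use the elementary inequality $\sigma_n' \leq \tau_n'$ combined with a monotonisation trick: $B^{(n)}(\sigma_n')$ is not monotone in $t$ because clones can die off, but if we replace each clone by its running maximum we obtain an object that is monotone in $t$ and whose value at $\tau_n'$ is controlled by Lemma \ref{easytimes}.

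First, I would introduce, for each $i$, the non-decreasing envelope $\hat{Y}_i(t):=\sup_{s\in[0,t]}Y_i(s)$ and set
\[
\hat{B}^{(n)}(t):=\sum_{i=1}^{K^{(n)}(t)}\hat{Y}_i(t-T_i^{(n)}).
\]
This is non-decreasing in $t$ (both the index set and every summand grow with $t$), and obviously dominates $B^{(n)}(t)$. Consequently
\[
B^{(n)}(\sigma_n')\;\leq\;\hat{B}^{(n)}(\sigma_n')\;\leq\;\hat{B}^{(n)}(\tau_n'),
\]
so it suffices to prove that $\sup_{n\in\mathbb{N}}\hat{B}^{(n)}(\tau_n')<\infty$ almost surely on $\{W>0\}$.

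Next I would argue that $\hat{B}^{(n)}(\tau_n')$ actually converges almost surely. By Lemma \ref{easytimes} applied at $t=0$, $K^{(n)}(\tau_n')\to K^*(0)$, and since both sequences take integer values and the limit is almost surely finite, $K^{(n)}(\tau_n')$ is eventually constant and equal to $K^*(0)$. Lemma \ref{easytimes} also gives $T_i^{(n)}-\tau_n'\to T_i^*$ for each $i\leq K^*(0)$, so $\tau_n'-T_i^{(n)}\to -T_i^*\in[0,\infty)$ almost surely. Since $Y_i$ is cadlag and piecewise constant, its running maximum $\hat{Y}_i$ is also cadlag, and the probability that the limit point $-T_i^*$ coincides with a jump time of $\hat{Y}_i$ is zero (the jump times of $\hat{Y}_i$ form a countable set independent of $T_i^*$, which has a continuous distribution by (\ref{firsttime}) and its analogues). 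Hence $\hat{Y}_i(\tau_n'-T_i^{(n)})\to \hat{Y}_i(-T_i^*)$ almost surely for each $i$, and summing over the eventually constant finite index set yields
\[
\lim_{n\rightarrow\infty}\hat{B}^{(n)}(\tau_n')=\sum_{i=1}^{K^*(0)}\hat{Y}_i(-T_i^*)\in[0,\infty)
\]
almost surely. An almost surely convergent sequence is almost surely bounded, which completes the argument.

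The main obstacle, as I see it, is the continuity issue in passing to the limit inside $\hat{Y}_i$: one has to rule out that $-T_i^*$ falls on a jump of $\hat{Y}_i$ and also control the number of summands uniformly. Both are handled by the independence of the clones from the wildtype process and the fact that the mutation times have continuous distributions, but they deserve explicit mention. Everything else is a routine monotonicity comparison reducing the problem to Lemma \ref{easytimes}.
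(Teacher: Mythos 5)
Your argument is correct and is essentially identical to the paper's proof: the same monotone envelope $\hat{B}^{(n)}$ built from the running maxima $\hat{Y}_i$, the same comparison chain $B^{(n)}(\sigma_n')\leq\hat{B}^{(n)}(\sigma_n')\leq\hat{B}^{(n)}(\tau_n')$, and the same appeal to Lemma \ref{easytimes} plus almost sure continuity of $\hat{Y}_i$ at $-T_i^*$ to get convergence (hence boundedness) of $\hat{B}^{(n)}(\tau_n')$. Your explicit justification of that continuity point (countably many jump times of $\hat{Y}_i$, independent of $T_i^*$ which has a continuous law) is a detail the paper leaves implicit, and it is the right one.
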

\begin{mylem}\label{tslemma}
Conditioning on $W>0$,
\[\tau_n'-\sigma_n'\rightarrow0\]
almost surely.
\end{mylem}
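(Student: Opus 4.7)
The plan is to write $\tau_n'-\sigma_n'$ in the form $\lambda_A^{-1}\log(A(\tau_n')/A(\sigma_n'))+o(1)$ and then show the ratio $A(\tau_n')/A(\sigma_n')$ tends to $1$ using the uniform boundedness furnished by Lemma \ref{suplemma}.

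First I would observe that since $A$ and $B^{(n)}$ are $\mathbb{Z}_{\geq0}$-valued and jump by $\pm 1$, we have the exact identities $A(\tau_n')=n$ and $A(\sigma_n')+B^{(n)}(\sigma_n')=n$. Setting $D_n:=B^{(n)}(\sigma_n')$, Lemma \ref{suplemma} gives $M:=\sup_n D_n<\infty$ almost surely on $\{W>0\}$, so $A(\sigma_n')=n-D_n\geq n-M$ and in particular $A(\sigma_n')/n\to 1$ almost surely. Also, since $A(\sigma_n')\to\infty$ on $\{W>0\}$, both $\sigma_n'$ and $\tau_n'$ tend to $\infty$ almost surely on $\{W>0\}$ (recall Lemma \ref{we}).

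Next I would invoke the classical limit (\ref{classic}): on $\{W>0\}$, $e^{-\lambda_A t}A(t)\to W$ almost surely as $t\to\infty$. Applied along the random sequences $\sigma_n'\uparrow\infty$ and $\tau_n'\uparrow\infty$ (a trivial consequence of the deterministic-time convergence, since $e^{-\lambda_A t}A(t)$ is cadlag), this yields $e^{-\lambda_A\sigma_n'}A(\sigma_n')\to W$ and $e^{-\lambda_A\tau_n'}A(\tau_n')\to W$ almost surely. Taking logarithms (permissible because $W>0$), we obtain
\[
\lambda_A\tau_n'=\log A(\tau_n')-\log W+o(1),\qquad \lambda_A\sigma_n'=\log A(\sigma_n')-\log W+o(1),
\]
and hence
\[
\tau_n'-\sigma_n'=\lambda_A^{-1}\log\!\frac{A(\tau_n')}{A(\sigma_n')}+o(1)=\lambda_A^{-1}\log\!\frac{n}{n-D_n}+o(1)\longrightarrow 0
\]
almost surely, since $D_n\leq M<\infty$ forces the log term to vanish.

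The only delicate point is step verifying that $A(\sigma_n')\to\infty$ on $\{W>0\}$ so that the asymptotic $e^{-\lambda_A t}A(t)\to W$ may legitimately be evaluated at the random times $\sigma_n'$; but this is immediate from $A(\sigma_n')\geq n-M$. Everything else is essentially algebra once Lemma \ref{suplemma} is in hand, so I do not anticipate a significant obstacle.
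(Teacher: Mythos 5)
Your proof is correct and rests on the same two ingredients as the paper's argument --- Lemma \ref{suplemma} and the almost sure convergence (\ref{classic}) evaluated along hitting times --- with only a cosmetic difference in the bookkeeping: the paper sandwiches $\sigma_n'$ between $\tau'_{n-a_n}$ and $\tau_n'$ for an auxiliary sequence $a_n=n^{1/2}$, whereas you evaluate the limit directly at $\sigma_n'$ and take logarithms. I see no gap.
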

\begin{proof}
Consider a positive sequence $(a_n)$, such that
\begin{enumerate}
\item
$\lim_{n\rightarrow\infty}a_n=\infty$, and
\item
$\lim_{n\rightarrow\infty}(n-a_n)/n=1$.
\end{enumerate} For example $a_n= n^{1/2}$ will do.
Since
\ba
e^{\lambda_A(\tau_n'-\tau_{n-a_n}')}=\frac{We^{\lambda_A\tau_n'}}{A(\tau_n')}\frac{A(\tau_n')}{n}\frac{n}{n-a_n}\frac{n-a_n}{A(\tau_{n-a_n}')}\frac{A(\tau_{n-a_n}')}{We^{\lambda_A\tau_{n-a_n}'}},
\ea
we have that as $n\rightarrow\infty$
\ba
\tau_n'-\tau_{n-a_n}'\rightarrow0.
\ea
By Lemma \ref{suplemma},
\[B^{(n)}(\sigma'_n)\leq a_n\]
for sufficiently large $n$. For such $n$
\[A(\sigma_n')\geq n-a_n,\]
so
\[\sigma_n'\geq\tau_{n-a_n}',\]
and hence
\[
0\leq\tau_n'-\sigma_n'\leq\tau_n'-\tau_{n-a_n}'.
\]
\end{proof}
\begin{proof}[Proof of Theorem \ref{times}]Combine Lemmas \ref{easytimes} and \ref{tslemma} to see that conditioning on $W>0$,
\[\lim_{n\rightarrow\infty}K^{(n)}(\rho_n+t)= K^*(t)\]
almost surely, for each $t\in\mathbb{R}$. Convergence in finite dimensional distributions follows. Then apply Lemmas \ref{conditioning} and \ref{we} so that we may instead condition on the $\rho_n<\infty$.
\end{proof}
\begin{proof}[Proof of Propositions \ref{lpsml}, \ref{cs}, and \ref{lc}]
By Lemmas \ref{easytimes} and \ref{tslemma}, conditioning on $W>0$,
\[
\lim_{n\rightarrow\infty}\rho_n+t-T_i^{(n)}= t-T_i^*
\]
almost surely. Use that the $Y_i$ are almost surely continuous at  $t-T_i^*$. Then apply Lemmas \ref{conditioning} and \ref{we}, to condition on $\rho_n<\infty$.
\end{proof}

\begin{proof}[Proof of Proposition \ref{ltsmt}]
It needs to be shown that for each $t\in\mathbb{R}$
\[N\left(\int_{-t_n}^{t}\nu_nA(t_n+s)ds\right)\rightarrow N\left(\int_{-\infty}^tW\eta e^{\lambda_As}ds\right)\]
almost surely, as $n\rightarrow\infty$. Indeed, writing
\[\nu_nA(t_n+s)=\nu_ne^{\lambda_At_n}\frac{A(t_n+s)}{e^{\lambda_A(t_n+s)}}e^{\lambda_As},
\]
one sees that $\nu_nA(t_n+s)$ converges to the appropriate limit and is dominated by a multiple of $e^{\lambda_As}$.
\end{proof}
Proposition \ref{ltsml} follows by continuity.

\subsection*{Proofs for Section \ref{sec:finite}}
We first make note of a classic result, which can be found in \cite{ar}.
\begin{lemma}\label{oe}
Assume that $\beta_A=0$. For each $n$, $\left(\tau_n-\tau_k\right)_{k=1}^{n-1}$ has the same distribution as a collection of $n-1$ i.i.d. Exponential($\alpha_A$) random variables, which are ordered by size.

\end{lemma}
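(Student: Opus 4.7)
The plan is to combine the Markov description of a pure-birth (Yule) process with the standard gap representation of exponential order statistics, and then to check that the two sequences of gaps have the same joint law.

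First I would use the hypotheses of Section \ref{sec:finite}, namely $\beta_A=0$ and $A(0)=1$, to identify $(A(t))_{t\geq 0}$ as a Yule process with per-capita birth rate $\alpha_A$. Applying the strong Markov property at each $\tau_k$ (and using that $A(\tau_k)=k$), the successive waiting times $\tau_{k+1}-\tau_k$ are independent with $\tau_{k+1}-\tau_k\sim$ Exponential$(k\alpha_A)$ for $k=1,\dots,n-1$, and $\tau_1=0$.

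Next I would recall the R\'enyi representation for exponential order statistics: if $\xi_1,\dots,\xi_{n-1}$ are i.i.d.\ Exponential$(\alpha_A)$ with order statistics $\xi_{(1)}<\xi_{(2)}<\cdots<\xi_{(n-1)}$, then, setting $\xi_{(0)}:=0$, the gaps $\xi_{(j)}-\xi_{(j-1)}$ are independent with $\xi_{(j)}-\xi_{(j-1)}\sim$ Exponential$((n-j)\alpha_A)$ for $j=1,\dots,n-1$.

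The final step is just a matching of gaps. I would define $F_k:=\tau_n-\tau_{n-k}$ for $k=1,\dots,n-1$. Then $F_0=0$ and
\[
F_k-F_{k-1}=\tau_{n-k+1}-\tau_{n-k}\sim\text{Exponential}((n-k)\alpha_A),
\]
and the $F_k-F_{k-1}$ are independent by the first step. Comparing with the R\'enyi representation at index $j=k$ shows $(F_k)_{k=1}^{n-1}\stackrel{d}{=}(\xi_{(k)})_{k=1}^{n-1}$. Re-indexing $k\mapsto n-k$ then gives $(\tau_n-\tau_k)_{k=1}^{n-1}\stackrel{d}{=}(\xi_{(n-k)})_{k=1}^{n-1}$, which is the same ordered sample of $n-1$ i.i.d.\ Exponential$(\alpha_A)$ variables listed in the opposite order; this is the claim of the lemma.

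I do not expect any real obstacle here: both the Yule inter-arrival structure and R\'enyi's identity are classical, so the only care needed is bookkeeping of indices (and noting that $\tau_1=0$, so the collection has exactly $n-1$ nontrivial terms). If one preferred to avoid citing R\'enyi, one could equivalently compute the joint Laplace transform of $(F_k)_{k=1}^{n-1}$ directly from the product of independent exponential Laplace transforms and observe that it coincides with that of the corresponding order statistics, but this would only reproduce the same bookkeeping in a heavier form.
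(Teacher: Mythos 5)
Your proof is correct. The paper does not actually prove this lemma---it simply cites it as a classic fact from the reference \cite{ar}---and the argument you give (independent Yule inter-arrival times $\tau_{k+1}-\tau_k\sim\mathrm{Exponential}(k\alpha_A)$ matched gap-by-gap against the R\'enyi representation of exponential order statistics) is precisely the standard proof of that classic fact, with the index bookkeeping handled correctly.
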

\begin{proof}[Proof of proposition \ref{finite}]
For each $i\in\mathbb{N}$ let $(T_{i,j})_{j\in\mathbb{N}}$ be the occurrence times of a homogeneous Poisson process on $[0,\infty)$ with intensity $\nu$. These are the mutation times corresponding to one particular wildtype cell present from time $0$. 
Noting that
\ba
A(t)=\sum_{i=1}^\infty 1_{[\tau_i,\infty)}(t),
\ea
it is apparent that the mutation times of all wildtype cells are distributed according to
\ba
(\tau_i+T_{i,j})_{i,j\in\mathbb{N}}.
\ea
The number of mutants at time $t$ is
\ba
B(t)\overset{d}{=}\sum_{i,j\in\mathbb{N}}1_{\{t-\tau_i-T_{i,j}\geq0\}}Y_{i,j}(t-\tau_i-T_{i,j})=\sum_{i\in\mathbb{N}}1_{\{t-\tau_i\geq0\}}D_i(t-\tau_i),
\ea
where
\ba
D_i(t)=\sum_{j\in\mathbb{N}} 1_{\{t-T_{i,j}\geq0\}}Y_{i,j}(t-T_{i,j})\overset{d}{=}\sum_{j=1}^{K_i(t)}Y_{i,j}(U_{i,j}t).
\ea
The $D_i(\cdot)$ are i.i.d. Now, using Lemma \ref{oe},
\ba
B(\tau_n)\overset{d}{=}\sum_{i=1}^{n-1}D_i(\tau_n-\tau_i)
\overset{d}{=}\sum_{i=1}^{n-1}D_i(\xi_i),
\ea
and by substituting $D_i(\cdot)$ the result is obtained.

\end{proof}

\begin{proof}[Proof of Lemma \ref{connect}]We will show that the events $\{B(\sigma_n)\leq k\}$ and $\{B(\tau_{n-k})\leq k\}$ are equal, using the monotonicity of $A(\cdot)$ and $B(\cdot)$ and the fact that $A(\sigma_n)+B(\sigma_n)=n$. First assume that $B(\sigma_n)\leq k$. Then $A(\sigma_n)\geq n-k$, so $\sigma_n\geq\tau_{n-k}$, and therefore $B(\tau_{n-k})\leq k$. Now assume that $B(\sigma_n)>k$. Then $A(\sigma_n)<n-k$, so $\sigma_n<\tau_{n-k}$, and hence $B(\tau_{n-k})>k$.

\end{proof}

\subsection*{Proofs for Section \ref{sec:sitefreq}}
\begin{proof}[Proof of Propositions \ref{sfsp} and \ref{sfst}] For $i\in\{1,..,S\}$ and $n\in\mathbb{N}$, $\left[C^{(n)}(\cdot)-B_i^{(n)}(\cdot)\right]$ is a birth-death branching process with birth and death rates $a(1-\mu_n)$ and $b(1-\mu_n)$. But we wish to make use of the proofs for the two-type model. Hence we will rescale time by a factor of $(1-\mu_n)$.

On a fresh probability space put a birth-death process, $(\tilde{A}(t))_{t\geq0}$, with birth and death rates $a$ and $b$. Put an independent Poisson process, $(\tilde{N}(t))_{t\geq0}$, with intensity $1$. And for $i,n\in\mathbb{N}$ put the birth-death processes $(\tilde{Y}_i^{(n)}(t))_{t\geq0}$, which we ask to satisfy:
\begin{enumerate}
\item
The $\tilde{Y}_i^{(n)}(\cdot)$ have birth and death rates $a/(1-\mu_n)$ and $b$, and initial condition $\tilde{Y}_i^{(n)}(0)=1$.
\item
For each $n$, the $\tilde{Y}_i^{(n)}(\cdot)$ are independent ranging over $i$.
\item
The $\tilde{Y}_i^{(n)}(\cdot)$ are independent of $\tilde{A}(\cdot)$ and $\tilde{N}(\cdot)$.
\item
For each $i$, $\lim_{n\rightarrow\infty}\tilde{Y}_i^{(n)}(\cdot)=\tilde{Y}_i(\cdot)$ exists almost surely, in the standard Skorokhod sense on the space of cadlag functions $\mathbb{D}[0,\infty)$.
\end{enumerate}
Define $\tilde{K}^{(n)}(t)=\tilde{N}\left(\frac{a\mu_n}{1-\mu_n}\int_0^t\tilde{A}(s)ds\right)$ and $\tilde{T}_i^{(n)}=\inf\{t\geq0:\tilde{K}^{(n)}(t)= i\}$. Define
\[\tilde{B}^{(n)}(t)=\sum_{i=1}^{\tilde{K}^{(n)}(t)}\tilde{Y}_i^{(n)}(t-\tilde{T}_i^{(n)}),
\]
and then $\tilde{\sigma}_n=\inf\{t\geq0:\tilde{A}(t)+\tilde{B}^{(n)}(t)\geq n\}$.

We have just defined a slight adaptation of the framework used for the small mutation limits of the two-type model. The proofs for the two-type model are readily adapted to this new setting. Here, the mutation rates are $a\mu_n/(1-\mu_n)$. Suppose that the $\mu_n$ satisfy the condition of Proposition \ref{sfsp}, then $\lim_{n\rightarrow\infty}na\mu_n/(1-\mu_n)=\theta$. Follow the proof of Proposition \ref{lpsml} to see that
\[
\lim_{n\rightarrow\infty}\mathbb{P}[\tilde{B}^{(n)}(\tilde{\sigma}_n)=k|\tilde{\sigma}_n<\infty]=\mathbb{P}[B^*=k].
\]
Then, use that
\[(B^{(n)}_i(t))_{t\geq0}\overset{d}{=}(\tilde{B}^{(n)}((1-\mu_n)t))_{t\geq0}
\]
and
\[\sigma_n'\overset{d}{=}\tilde{\sigma}_n/(1-\mu_n),
\]
to obtain
\[
\lim_{n\rightarrow\infty}\mathbb{P}[B_i^{(n)}(\sigma'_n)=k|\sigma'_n<\infty]=\mathbb{P}[B^*=k].
\]
This gives Proposition \ref{sfsp}.

Similarly, if the $\mu_n$ and $t_n$ satisfy the conditions of Proposition \ref{sfst}, then $\lim_{n\rightarrow\infty}e^{\lambda t_n/(1-\mu_n)}a\mu_n/(1-\mu_n)=\eta$. Follow the proof of Proposition \ref{ltsml} to see that
\[
\lim_{n\rightarrow\infty}\mathbb{P}[\tilde{B}^{(n)}(t_n/(1-\mu_n))=k]=\mathbb{P}[B^\circ=k].
\]
Then
\[
\lim_{n\rightarrow\infty}\mathbb{P}[B_i^{(n)}(t_n)=k]=\mathbb{P}[B^\circ=k],
\]
as required.
\end{proof}

\section*{Acknowledgements}
We thank Michael Nicholson and Stefano Avanzini for helpful discussions. We thank two anonymous referees for corrections and insightful comments. David Cheek was supported by The Maxwell Institute Graduate School in Analysis and its Applications, a Centre for Doctoral Training funded by the UK Engineering and Physical
Sciences Research Council (grant EP/L016508/01), the Scottish Funding Council, Heriot-Watt University and the University of Edinburgh.

\appendix
\section{Generalised two-type model}\label{sec:app}
Here we present a generalisation of Kendall's model in which the results of Sections \ref{sec:lpsm} and \ref{sec:ltsm} are valid. The broader framework encompasses more general branching processes as well as semi-deterministic versions of the model (for example \cite{lc,ka,dm}).

Consider the model defined in Section \ref{sec:mod}. Relax the requirement that $A(\cdot)$ and the $Y_i(\cdot)$ need to be birth-death branching processes. Instead let $A(\cdot)$ and the $Y_i(\cdot)$ be $[0,\infty)$-valued cadlag processes. Demand further that there exists $\lambda_A>0$ and a non-negative random variable $W$ with
\begin{enumerate}
\item
$\lim_{t\rightarrow\infty}e^{-\lambda_At}A(t)=W$, and
\item
$\{W=0\}=\{\exists T>0,\forall t\geq T,A(t)=0\}$,
\end{enumerate}
almost surely. We claim that Theorem \ref{times} and Propositions \ref{tausigma}, \ref{lpsml},  \ref{cs}, \ref{lc}, \ref{ltsmt}, and \ref{ltsml} remain valid.

To see this, only the proof of Lemma \ref{easytimes} requires additional work.  Observe that conditioning on $W>0$,

\begin{itemize}
\item
$\tau_n<\infty$ for each $n\in\mathbb{N}$, and
\item
$\lim_{n\rightarrow\infty}\tau_n=\infty$.
\end{itemize}
Then we need Lemma \ref{alo}.

\begin{lemma}\label{alo}
Conditioning on $W>0$,
\[
\lim_{n\rightarrow\infty}n^{-1}A(\tau_n)=1
\]
almost surely.
\begin{proof} Fix $\omega\in\{W>0\}$ and $\epsilon>0$. Then there exists $T>0$ such that for all $t>T$,
\[
|e^{-\lambda_A t}A(t)-W|\leq \epsilon.
\]
There is some $N\in\mathbb{N}$ such that for any integer $n\geq N$, $\tau_n>T$. Now, for all such $n$,
\[
|e^{-\lambda_A\tau_n}A(\tau_n^-)-W|=\lim_{t\uparrow \tau_n}|e^{-\lambda_A t}A(t)-W|\leq\epsilon,
\]
where $A(\tau_n^{-})=\lim_{t\uparrow \tau_n}A(t)$.
That is to say,
\[\lim_{n\rightarrow\infty}e^{-\lambda\tau_n}A(\tau_n^-)=W.
\]
Finally,
\[
1\leq\frac{A(\tau_n)}{n}\leq\frac{A(\tau_n)}{A(\tau_n^-)}=\frac{A(\tau_n)}{e^{\lambda_A\tau_n}}\frac{e^{\lambda_A\tau_n}}{A(\tau_n^-)}\rightarrow1.\color{black}
\]
\end{proof}
\end{lemma}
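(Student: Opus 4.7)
The plan is to sandwich $A(\tau_n)/n$ between two quantities which both tend to $1$, by exploiting the cadlag structure of $A(\cdot)$ at the hitting time $\tau_n$ together with the hypothesised convergence $e^{-\lambda_A t}A(t)\to W$.

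First I would record the deterministic two-sided inequality
\[
A(\tau_n^-) \;\leq\; n \;\leq\; A(\tau_n),
\]
valid on $\{\tau_n<\infty\}$, where $A(\tau_n^-)=\lim_{t\uparrow\tau_n}A(t)$. The right inequality follows from right-continuity: if $A(\tau_n)<n$, then $A$ would stay below $n$ on some right neighbourhood of $\tau_n$, as well as on $[0,\tau_n)$ by definition of the infimum, contradicting that very infimum. The left inequality follows from the existence of the left limit: if $A(\tau_n^-)>n$, then $A$ would already exceed $n$ on some left neighbourhood of $\tau_n$, again contradicting the definition of $\tau_n$. Dividing through by $A(\tau_n^-)$, which is strictly positive once $\tau_n$ has grown past any finite time, produces
\[
1 \;\leq\; \frac{A(\tau_n)}{n} \;\leq\; \frac{A(\tau_n)}{A(\tau_n^-)}.
\]

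Next I would transfer the hypothesis $e^{-\lambda_A t}A(t)\to W$ onto the random sequence $(\tau_n)$. On $\{W>0\}$, the times $\tau_n$ must tend to infinity, for otherwise $A$ would attain arbitrarily large values at some bounded time, which is incompatible with $e^{-\lambda_A t}A(t)\to W<\infty$. The hypothesis then applies along $(\tau_n)$, yielding $e^{-\lambda_A\tau_n}A(\tau_n)\to W$. For the analogous left-limit statement, I would fix $\epsilon>0$, pick $T$ so large that $|e^{-\lambda_A t}A(t)-W|<\epsilon$ for all $t>T$, and for $n$ so large that $\tau_n>T$ pass to the limit $t\uparrow\tau_n$ in this uniform bound to conclude $|e^{-\lambda_A\tau_n}A(\tau_n^-)-W|\leq\epsilon$. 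Since $W>0$, this gives $A(\tau_n)/A(\tau_n^-)\to 1$, and the sandwich from the first step finishes the proof.

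The one point requiring some care is precisely this left-limit transfer: the left limit at $\tau_n$ is itself the limit of $A(t)$ along a sequence depending on $n$, so the passage from a pointwise hypothesis to a limit along the random times $\tau_n$ needs the brief uniformity argument just described. Everything else reduces to arithmetic with the sandwich and the fact that $W>0$ on the conditioning event.
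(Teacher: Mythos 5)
Your proof is correct and follows essentially the same route as the paper's: establish the sandwich $A(\tau_n^-)\leq n\leq A(\tau_n)$, transfer the convergence $e^{-\lambda_A t}A(t)\to W$ to the left limits along the random times $\tau_n$ via the same $\epsilon$--$T$ argument, and conclude from $W>0$ that $A(\tau_n)/A(\tau_n^-)\to1$. You are merely more explicit than the paper about justifying the sandwich inequality and the fact that $\tau_n\to\infty$ on $\{W>0\}$, which the paper records as an observation just before the lemma.
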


\bibliographystyle{imsart-nameyear}

\bibliography{References} 
\end{document}